%% LyX 2.1.1 created this file.  For more info, see http://www.lyx.org/.
%% Do not edit unless you really know what you are doing.
\documentclass[a4paper,twoside,american,english]{scrartcl}
\usepackage[T1]{fontenc}
\usepackage[latin1]{inputenc}
\setcounter{secnumdepth}{5}
\setcounter{tocdepth}{5}
\setlength{\parskip}{\medskipamount}
\setlength{\parindent}{0pt}
\usepackage{babel}
\usepackage{verbatim}
\usepackage{mathtools}
\usepackage{amsthm}
\usepackage{amsmath}
\usepackage{amssymb}
\usepackage[unicode=true,pdfusetitle,
 bookmarks=true,bookmarksnumbered=false,bookmarksopen=false,
 breaklinks=false,pdfborder={0 0 1},backref=false,colorlinks=false]
 {hyperref}

\makeatletter

%%%%%%%%%%%%%%%%%%%%%%%%%%%%%% LyX specific LaTeX commands.
\pdfpageheight\paperheight
\pdfpagewidth\paperwidth

%%%%%%%%%%%%%%%%%%%%%%%%%%%%%% Textclass specific LaTeX commands.
\numberwithin{equation}{section}
\numberwithin{figure}{section}
\theoremstyle{plain}
\newtheorem{thm}{\protect\theoremname}[section]
  \theoremstyle{definition}
  \newtheorem{defn}[thm]{\protect\definitionname}
  \theoremstyle{definition}
  \newtheorem{example}[thm]{\protect\examplename}
  \theoremstyle{plain}
  \newtheorem{lem}[thm]{\protect\lemmaname}
  \theoremstyle{plain}
  \newtheorem{cor}[thm]{\protect\corollaryname}
  \theoremstyle{plain}
  \newtheorem{prop}[thm]{\protect\propositionname}
  \theoremstyle{remark}
  \newtheorem{rem}[thm]{\protect\remarkname}

\@ifundefined{date}{}{\date{}}
%%%%%%%%%%%%%%%%%%%%%%%%%%%%%% User specified LaTeX commands.

%\usepackage[english]{babel}     % Deutsche Silbentrennung
%\selectlanguage{english}
%\usepackage[cp850]{inputenc}    % Umlaute direkt ber PC-Tastatur
\usepackage[T1]{fontenc}    % Silbentrennung auch nach Umlauten mglich
\usepackage{enumerate}
\usepackage{a4wide}        % A4-Format
\addtolength{\headheight}{2pt}
\usepackage{amsmath}
\usepackage{euscript}
\usepackage{url}
\usepackage{scrpage2}
\pagestyle{useheadings}
\usepackage{tu-preprint}
\usepackage{scrpage2}
\pagestyle{useheadings}

\allowdisplaybreaks[1] % allow page breaks in some displayed equations
\usepackage{amsfonts}

%\includeonly{pde53_he.lyx}
\newcommand{\hide}[1]{}

\newenvironment{keywords}{ \noindent\footnotesize\textbf{Keywords and phrases:}}{}

\newenvironment{class}{\noindent\footnotesize\textbf{Mathematics subject classification 2010:}}{}

                  % sets
%\newcommand*{\set}[1]{{\mathit{bold}{#1}}}                  % sets
                % fields
     % graph
%\newcommand*{\closure}[1]{\overline{#1}}            % closure
%\newcommand*{\closure}{\mathop{\mathrm{cl}}\nolimits}            %
            % domain

            % Span
            % degree
\newcommand*{\trace}{\operatorname{trace}}
\newcommand*{\restricted}[1]{{\mid_{#1}}}

\newcommand*{\dive}{\operatorname{div}}
\renewcommand*{\div}{\operatorname{div}}
\newcommand*{\curl}{\operatorname{curl}}

\newcommand*{\grad}{\operatorname{grad}}

%\newcommand*{\abs}[1]{\lvert#1\rvert}
%\newcommand*{\norm}[1]{\lvert#1\rvert}
%\newcommand*{\opnorm}[1]{\lVert#1\rVert}
%\renewcommand{\theequation}{\arabic{chapter}.\arabic{section}.\arabic{equation}}

%\newcommand{\f}[1]{\textup{(\ref{#1})}}
%\newcommand{\er}[1]{\textup{(#1)}}
%\DeclareRobustCommand{\f}[1]{\textup{(\ref{#1})}}
%\DeclareRobustCommand{\pref}[1]{\ref{#1}}
%\newcommand*{\la}{\big\langle}
%\newcommand*{\ra}{\big\rangle}

%\newcommand*{\id}{{\mathrm{id}}}

%\renewcommand{\closure}[1]{\operatorname{cl}(#1)}

%\renewcommand{\interior}[1]{\operatorname{int}(#1)}
%\renewcommand{\interior}[1]{\overset{\mathrm{o}}{#1}}
\DeclareMathAccent{\Circ}{\mathalpha}{operators}{"17}
\newcommand{\interior}[1]{\Circ{#1}}
\DeclareMathOperator{\id}{id}

\renewcommand{\Re}{\operatorname{\mathfrak{Re}}}

%\newcmmand{\oline}[1]{\overline{#1}}
%\newcommand{\oline}[1]{\widehat{#1}}
%\newcommand{\uline}[1]{\underline{#1}}
%\newcommand{\uline}[1]{\check{#1}}
%\newcommand{\mline}[1]{\widetilde{#1}}

\newcommand{\oi}[2]{\left]#1,#2 \right[}

\renewcommand{\i}[0]{\mathrm{i}}

\renewcommand{\tilde}{\widetilde}
\renewcommand*{\epsilon}{\varepsilon}
\renewcommand*{\rho}{\varrho}

\arraycolsep2pt

\makeatother

\institut{Institut f\"ur Analysis}

\preprintnumber{MATH-AN-02-2015}

\preprinttitle{On Abstract $\grad$--$\div$ Systems.}

\author{Rainer Picard, Stefan Seidler, Sascha Trostorff, Marcus Waurick}

\makeatother

  \addto\captionsamerican{\renewcommand{\corollaryname}{Corollary}}
  \addto\captionsamerican{\renewcommand{\definitionname}{Definition}}
  \addto\captionsamerican{\renewcommand{\examplename}{Example}}
  \addto\captionsamerican{\renewcommand{\lemmaname}{Lemma}}
  \addto\captionsamerican{\renewcommand{\propositionname}{Proposition}}
  \addto\captionsamerican{\renewcommand{\remarkname}{Remark}}
  \addto\captionsamerican{\renewcommand{\theoremname}{Theorem}}
  \addto\captionsenglish{\renewcommand{\corollaryname}{Corollary}}
  \addto\captionsenglish{\renewcommand{\definitionname}{Definition}}
  \addto\captionsenglish{\renewcommand{\examplename}{Example}}
  \addto\captionsenglish{\renewcommand{\lemmaname}{Lemma}}
  \addto\captionsenglish{\renewcommand{\propositionname}{Proposition}}
  \addto\captionsenglish{\renewcommand{\remarkname}{Remark}}
  \addto\captionsenglish{\renewcommand{\theoremname}{Theorem}}
  \providecommand{\corollaryname}{Corollary}
  \providecommand{\definitionname}{Definition}
  \providecommand{\examplename}{Example}
  \providecommand{\lemmaname}{Lemma}
  \providecommand{\propositionname}{Proposition}
  \providecommand{\remarkname}{Remark}
\providecommand{\theoremname}{Theorem}

\begin{document}
\selectlanguage{american}%
\makepreprinttitlepage

\selectlanguage{english}%

\title{On Abstract $\mathrm{grad}$--$\mathrm{div}$ Systems.}

\author{Rainer Picard, Stefan Seidler, Sascha Trostorff, Marcus Waurick}

\maketitle
\selectlanguage{american}%
\begin{abstract}\foreignlanguage{english}{ \textbf{Abstract. }For
a large class of dynamical problems from mathematical physics the
skew-selfadjointness of a spatial operator of the form $A=\left(\begin{array}{cc}
0 & -C^{*}\\
C & 0
\end{array}\right)$, where $C:D\left(C\right)\subseteq H_{0}\to H_{1}$ is a closed densely
defined linear operator, is a typical property. Guided by the standard
example, where $C=\grad=\left(\begin{array}{c}
\partial_{1}\\
\vdots\\
\partial_{n}
\end{array}\right)$ (and $-C^{*}=\dive$, subject to suitable boundary constraints),
an abstract class of operators $C=\left(\begin{array}{c}
C_{1}\\
\vdots\\
C_{n}
\end{array}\right)$ is introduced (hence the title). As a particular application we consider
a non-standard coupling mechanism and the incorporation of diffusive
boundary conditions both modeled by setting associated with a skew-selfadjoint
spatial operator $A$.}\end{abstract}

\begin{keywords}\foreignlanguage{english}{ Evolutionary equations,
Gelfand triples, Guyer-Krumhansl heat conduction, Dynamic boundary
conditions, Leontovich boundary condition }\end{keywords}

\begin{class}\foreignlanguage{english}{ 35F45, 46N20, 47N20} \end{class}

\selectlanguage{english}%
\newpage

\tableofcontents{}

\newpage

\setcounter{section}{-1}

\section{Introduction}

In a number of studies \cite{Waurick2014MMAS_Frac,Waurick2014MMAS_MaxGrav,Waurick2014MMS_Thermo-Ela,Waurick2014SIAM_HomFrac,Trostorff,Picard2013}
it has been illustrated, that typical initial boundary value problems
of mathematical physics can be represented in the general form
\begin{equation}
\left(\partial_{0}\mathcal{M}+A\right)U=F,\label{eq:evo}
\end{equation}
where $A$ is skew-selfadjoint, indeed commonly of the specific block
matrix form 
\begin{equation}
A=\left(\begin{array}{cc}
0 & -C^{*}\\
C & 0
\end{array}\right)\label{eq:skew-canon}
\end{equation}
with $C:X_{1}\subseteq X_{0}\to Y$ a closed densely defined linear
operator between Hilbert spaces $X_{0}$ and $Y$ with $X_{1}=D(C)$,
see e.g. \cite{Pi2009-1,PDE_DeGruyter}. The operator $\mathcal{M}$
is referred to as the material law operator, which in the situation
discussed here is a linear operator acting on a Hilbert space realizing
the space-time the problems are formulated in, \cite{Waurick2013JEE_Non-auto,Waurick2014MMAS_Non-auto,Waurick2014MMAS_Frac,Pi2009-1}.

The main purpose of this paper is to focus on the operator $C$ in
this construction of the operator $A:D\left(C\right)\times D\left(C^{*}\right)\subseteq X_{0}\oplus Y\to X_{0}\oplus Y$
when $Y$ is itself a direct sum of Hilbert spaces. In such a situation
we shall loosely refer to a system of the form (\ref{eq:evo}) as
an abstract $\grad$--$\dive$ system. The guiding example, which
at the same time motivates the name, is to take for $C:X_{1}\subseteq X_{0}\to Y$
the differential operator $\nabla=\left(\begin{array}{c}
\partial_{1}\\
\partial_{2}\\
\partial_{3}
\end{array}\right)$ with domain $X_{1}=\interior H_{1}\left(\Omega\right)$, the Sobolev
space of weakly differentiable functions with $L^{2}$-derivatives
and vanishing boundary data. The spaces $X_{0}$ and $Y$ would be
in this case $L^{2}\left(\Omega\right)$ and $L^{2}\left(\Omega\right)^{3}\left(=L^{2}\left(\Omega\right)\oplus L^{2}\left(\Omega\right)\oplus L^{2}\left(\Omega\right)\right)$,
respectively. The corresponding equation of the form (\ref{eq:evo})
would lead in particular to a model for the acoustic wave propagation
or -- depending on the material law operator $\mathcal{M}$ -- for
the dissipation of heat with so-called Dirichlet boundary condition,
see e.g.~\cite{Pi2009-1,Waurick2014PROC_Survey,Waurick2013_ContDep}.
The adjoint operator is the negative weak divergence $C^{*}=-\dive$.
It is 
\begin{equation}
C=\left(\begin{array}{c}
\partial_{1}\restricted{X_{1}}\\
\partial_{2}\restricted{X_{1}}\\
\partial_{3}\restricted{X_{1}}
\end{array}\right).\label{eq:grad}
\end{equation}
The idea in this paper is to replace the role of the partial derivatives
in (\ref{eq:grad}) by general operators in general Hilbert spaces,
hence the term abstract grad-div systems for the corresponding evolutionary
systems associated with the skew-selfadjoint operator A constructed
according to (\ref{eq:skew-canon}). That the study of this particular
class of skew-selfadjoint operators leads to interesting applications
is illustrated by three examples. First we consider the so-called
Guyer-Krumhansl model of thermo-dynamics, \cite{PhysRev.133.A1411,PhysRev.148.766,PhysRev.148.778},
as a particular instance of this construction. The last two examples
illustrate that the concept of $\grad$--$\dive$ systems is well-suited
to study dynamic boundary conditions. A first implementation, leading
up to the concept of $\grad$--$\dive$ systems introduced here, has
been established in the context of a particular class of boundary
control problems, \cite{zbMATH06295127,zbMATH06203877}. Our investigation
of $\grad$--$\dive$ systems has been motivated by the discussion
of a heat conduction problem with a dissipative dynamic boundary condition
in \cite{2013arXiv1312.5882D}. The generalization to the context
of evolutionary equations is the topic of our second example. The
last application example is connected to the Leontovich boundary condition
of electrodynamics, see e.g.~\cite{zbMATH03105467,Gilliam1982129,zbMATH00768021}.
We shall discuss two different implementations. The first one is based
on classical boundary trace concepts (see e.g.~\cite{zbMATH01866780,WeissStaffans2013}),
the second approach uses a boundary data concept developed in \cite{zbMATH06295127,zbMATH06203877}
with extensive use in \cite{Trostorff2014}, which has the advantage
that no constraints on the quality of the boundary are incurred. 

In Section \ref{sec:Construction-of-Abstract} we start by presenting
the fundamental construction of abstract $\mathrm{grad}$--$\mathrm{div}$
systems. The remaining section is then dedicated to illustrating the
usefulness of the concept by examples.

\section{\label{sec:Construction-of-Abstract}Construction of Abstract $\mathrm{grad}$--$\mathrm{div}$
Systems.}

In this section, we shall reconsider the concept of the adjoint operator
of a densely defined, closed linear operator $C$, specifically in
order to deal with the fact that the image space $Y$ of the operator
$C$ is given as an orthogonal sum of Hilbert spaces. Let us first
provide a precise definition of what we would like to call an abstract
$\grad$-$\dive$-system.
\begin{defn}
Let $C:X_{1}\subseteq X_{0}\to Y$, be a densely defined, closed linear
operator with domain $X_{1}$ between Hilbert spaces $X_{0},Y$. We
shall refer to a system of the form (\ref{eq:evo}) with $A$ generated
via (\ref{eq:skew-canon}), as an \emph{abstract $\grad$--$\dive$
system, }if $Y$ given as a direct sum, i.e. $Y\coloneqq\bigoplus_{k\in\{1,\ldots,n\}}Y_{k}$,
for Hilbert spaces $Y_{k}$, $k\in\{1,\ldots,n\}$, $n\in\mathbb{N}$\emph{. }
\end{defn}
As a matter of jargon we shall say that the abstract\emph{ $\grad$--$\dive$
}system\emph{ }is \emph{generated by }$C$. If $\iota_{Y_{k}}$ denotes
the canonical isometric embedding of \emph{$Y_{k}$ }into\emph{ $Y$}
then, with $C_{k}\coloneqq\iota_{Y_{k}}^{*}C$, $k\in\{1,\ldots,n\}$,
we have 
\[
Cx=C_{1}x\oplus\cdots\oplus C_{n}x=\left(\begin{array}{c}
C_{1}x\\
\vdots\\
C_{n}x
\end{array}\right)=\left(\begin{array}{c}
C_{1}\\
\vdots\\
C_{n}
\end{array}\right)x\in\left(\begin{array}{c}
Y_{1}\\
\vdots\\
Y_{n}
\end{array}\right)=Y
\]
for $x\in X_{1}$. 

The aim of this section is to give a characterization of the adjoint
$C^{*}$ of $C$. However, before stating and proving the respective
theorem, we shall explore as a first elementary example the name-giving
case mentioned in the introduction:
\begin{example}
\label{exa:gradient}Let $n\in\mathbb{N}$, $\Omega\subseteq\mathbb{R}^{n}$
non-empty and open, $X_{0}\coloneqq L^{2}\left(\Omega\right)$ and
$X_{1}\coloneqq\interior H_{1}(\Omega)$, the Sobolev space of weakly
differentiable functions in $L^{2}(\Omega)$ with vanishing Dirichlet
trace, i.e., the closure of compactly supported continuously differentiable
functions with respect to the $H_{1}$-norm $f\mapsto\sqrt{\sum_{\alpha\in\mathbb{N}_{0}^{n},|\alpha|\leq1}|\partial^{\alpha}f|_{L^{2}(\Omega)}^{2}}$.
Then clearly, with $Y=\bigoplus_{k\in\{1,\ldots,n\}}L^{2}\left(\Omega\right)$
the operator $C\coloneqq\interior\grad:\interior H_{1}(\Omega)\subseteq L^{2}\left(\Omega\right)\to\bigoplus_{k\in\{1,\ldots,n\}}L^{2}\left(\Omega\right)$,
defined as the closure of the classical vector-analytical operation
$\grad$ restricted to smooth functions with compact support in $\Omega$,
generates an abstract $\grad$-$\dive$-system. In the same way, choosing
$X_{1}=H_{1}(\Omega)$ instead, we get that $C=\grad$ also generates
an abstract $\grad$-$\dive$-system. We recall that the difference
between the operators $\grad$ and $\interior\grad$ lies in the additionally
prescribed homogeneous Dirichlet boundary condition for elements in
the domain of $\interior\grad.$
\end{example}
To clarify notation, we need the following definition.
\begin{defn}
Let $X,Y$ be Hilbert spaces and $L:X\to Y$ a bounded linear operator.
Then we denote the dual operator of $L$ by $L':Y'\to X'$ defined
by 
\[
\left(L'y'\right)(x)\coloneqq y'(Lx)\quad(y'\in Y',x\in X).
\]
Moreover, we may consider a densely defined linear operator $S:X_{1}\subseteq X_{0}\to Y$
such that 
\begin{align*}
L_{S}:X_{1} & \to Y\\
x & \mapsto Sx
\end{align*}
is a continuous linear operator ($S$ need not be closable) and define
the operator $S^{\diamond}:Y\to X_{1}'$ by $S^{\diamond}\coloneqq L_{S}^{\prime}\circ R_{Y'}.$
Here $R_{Y^{\prime}}:Y\to Y^{\prime}$ denotes the Riesz isomorphism,
given by 
\[
\left(R_{Y^{\prime}}y\right)(z)\coloneqq\langle y|z\rangle_{Y}\quad(y,z\in Y).
\]

\end{defn}
Note that we have by definition 
\[
\left(S^{\diamond}y\right)(x)=\left(L_{S}^{\prime}(R_{Y^{\prime}}y)\right)(x)=\left(R_{Y^{\prime}}y\right)(Sx)=\langle y|Sx\rangle_{Y}
\]
for $y\in Y,x\in X.$ As a matter of convenience this can be written
suggestively as
\[
\left\langle S^{\diamond}y|x\right\rangle _{X_{0}}=\langle y|Sx\rangle_{Y},
\]
where $\left\langle \:\cdot\:|\:\cdot\:\right\rangle _{X_{0}}$ denotes
here the continuous extension of the inner product of $X_{0}$ to
a duality pairing on $X_{1}^{\prime}\times X_{1}$. Note that $X_{1}\hookrightarrow X_{0}\stackrel{R_{X'_{0}}}{=}X'_{0}\hookrightarrow X_{1}^{\prime}$
is a Gelfand triple. 
\begin{lem}
\label{lem:adjoint-dual}Let $X_{0},X_{1},Y$ be Hilbert spaces, $X_{1}\subseteq X_{0}$
dense, $C\colon X_{1}\subseteq X_{0}\to Y$ a closed linear operator.
Then the adjoint $C^{*}$ of the operator $C$ is densely defined
and its adjoint $C^{*}$ is given by
\[
C^{*}=C^{\diamond}\cap\left(Y\oplus X_{0}\right),
\]
which is the same as to say
\[
C^{*}=\left\{ (y,x)\in Y\oplus X_{1}'\,\left|\, C^{\diamond}y=x\text{ and }x\in X_{0}\right.\right\} .
\]
\end{lem}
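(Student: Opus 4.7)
The plan is to verify the formula by unpacking the definitions on both sides and matching them via the Gelfand triple identification $X_1\hookrightarrow X_0\cong X_0'\hookrightarrow X_1'$. Density of $D(C^*)$ is, on the other hand, a classical consequence of the fact that $C$ is closed and densely defined between Hilbert spaces (use, e.g., von Neumann's theorem applied to the graph of $C$), and so may be quoted without further work.

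For the equality, I would first observe that, since $C$ is closed, $X_1$ becomes a Hilbert space under the graph norm, and the map $L_C\colon X_1\to Y$, $x\mapsto Cx$, is bounded. Hence $C^\diamond=L_C'\circ R_{Y'}\colon Y\to X_1'$ is everywhere defined and bounded, with
\[
(C^\diamond y)(u)=\langle y|Cu\rangle_Y\qquad (y\in Y,\;u\in X_1).
\]
Reading $C^*$ as its graph in $Y\oplus X_0$ and $C^\diamond$ as its graph in $Y\oplus X_1'$, the intersection $C^\diamond\cap(Y\oplus X_0)$ is well-defined through the canonical embedding $X_0\hookrightarrow X_1'$, $x\mapsto\langle x|\cdot\rangle_{X_0}$, delivered by the Gelfand triple.

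Next I would verify the two inclusions. For ``$\subseteq$'': if $(y,x)\in C^*$, then $\langle y|Cu\rangle_Y=\langle x|u\rangle_{X_0}$ for every $u\in X_1$; the left-hand side is $(C^\diamond y)(u)$ and the right-hand side is precisely the action of $x\in X_0\subseteq X_1'$ on $u$, so $C^\diamond y=x$ in $X_1'$ and $x\in X_0$. For ``$\supseteq$'': if $C^\diamond y=x$ with $x\in X_0$, then for every $u\in X_1$
\[
\langle y|Cu\rangle_Y=(C^\diamond y)(u)=\langle x|u\rangle_{X_0},
\]
so by definition of the adjoint $(y,x)\in C^*$.

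The only mildly delicate point, and what I would write out most carefully, is the bookkeeping around the Gelfand triple: making sure that the ``intersection with $Y\oplus X_0$'' is interpreted as ``$C^\diamond y\in X_1'$ actually lies in the image of $X_0$ under the canonical embedding, and $x$ is its preimage''. No deeper obstacle arises; once the identifications are fixed the equivalence is essentially the defining property of $C^*$ rewritten through the dual pairing.
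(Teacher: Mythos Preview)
Your proposal is correct and follows essentially the same approach as the paper: both arguments amount to unpacking the defining property of $C^{*}$ and matching it against the definition of $C^{\diamond}$ via the Gelfand triple $X_{1}\hookrightarrow X_{0}\hookrightarrow X_{1}'$. The paper's version is slightly terser (it dispatches the $\subseteq$ inclusion by noting $C^{*}\subseteq C^{\diamond}$ and $C^{*}\subseteq Y\oplus X_{0}$ directly), but the content is the same.
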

\begin{proof}
Note that, by definition, $C^{*}\subseteq C^{\diamond}$ and $C^{*}\subseteq Y\oplus X_{0}$.
Hence, $C^{*}\subseteq C^{\diamond}\cap(Y\oplus X_{0})$. For the
remaining implication, let $(y,x)\in Y\oplus X_{0}$. Then we have
\begin{align*}
(y,x)\in C^{*} & \iff\forall\phi\in X_{1}\colon\langle y|C\phi\rangle_{Y}=\langle x|\phi\rangle_{X_{0}}\\
 & \iff\forall\phi\in X_{1}\colon C^{\diamond}y(\phi)=\langle x|\phi\rangle_{X_{0}}.
\end{align*}
This establishes the assertion.
\end{proof}
The latter observation has been employed in more concrete situation
in control theory, see e.g.~\cite{zbMATH06203877,zbMATH06295127,Waurick2014IMAJMCI_ComprehContr}
and, in particular, the references in \cite{Waurick2014IMAJMCI_ComprehContr}.
\begin{lem}
\label{lem:Orth-sum}Let $C:X_{1}\subseteq X_{0}\to Y$ generate an
abstract $\grad$--$\dive$ system with $Y=\bigoplus_{k\in\left\{ 1,\ldots,n\right\} }Y_{k}$.
Then
\[
C^{\diamond}=\left(\begin{array}{ccc}
C_{1}^{\diamond} & \cdots & C_{n}^{\diamond}\end{array}\right)\colon Y\ni\left(\begin{array}{c}
y_{1}\\
\vdots\\
y_{n}
\end{array}\right)\mapsto\sum_{k=1}^{n}C_{k}^{\diamond}y_{k}\in X'_{1}
\]
\end{lem}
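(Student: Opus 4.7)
The plan is to unwind the definitions of $C^{\diamond}$ and $C_k^{\diamond}$ and then use that the inner product on the orthogonal sum $Y = \bigoplus_{k=1}^n Y_k$ decomposes as a sum of the inner products on the summands. Since the formula is really a direct computation on functionals over $X_1$, the main preliminary task is to confirm that $C^{\diamond}$ and all the $C_k^{\diamond}$ are actually defined, i.e.\ that $C$ and the $C_k$ are continuous on $X_1$ viewed as a Hilbert space in its own right (equipped with the graph norm of $C$).

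First I would observe that, since $C$ is closed, $(X_1, \langle\cdot\,,\cdot\rangle_{X_0} + \langle C\cdot\,, C\cdot\rangle_Y)$ is a Hilbert space, and $L_C \colon X_1 \to Y$, $x \mapsto Cx$, is continuous by construction. Because $C_k = \iota_{Y_k}^{*} C$ and $\iota_{Y_k}^{*}$ is a bounded operator from $Y$ to $Y_k$, the operator $L_{C_k}\colon X_1 \to Y_k$, $x\mapsto C_k x$, is also continuous. Hence $C^{\diamond}\colon Y \to X_1'$ and $C_k^{\diamond}\colon Y_k \to X_1'$ are all well-defined according to the definition preceding Lemma \ref{lem:adjoint-dual}.

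Next, for $y = (y_1,\ldots,y_n)\in Y$ and $x\in X_1$, I would compute, using the decomposition $Cx = (C_1 x, \ldots, C_n x)$ and the orthogonality of the summands in $Y$,
\[
(C^{\diamond} y)(x) = \langle y \,|\, Cx\rangle_Y = \sum_{k=1}^n \langle y_k \,|\, C_k x\rangle_{Y_k} = \sum_{k=1}^n (C_k^{\diamond} y_k)(x).
\]
Since this holds for every $x\in X_1$, we obtain the equality of functionals $C^{\diamond} y = \sum_{k=1}^n C_k^{\diamond} y_k$ in $X_1'$, which is the asserted formula.

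There is no real obstacle here: the statement is essentially the dual-side incarnation of the row/column decomposition of $C$ induced by the orthogonal decomposition of $Y$, and it falls out once one is careful that $C^{\diamond}$ is built from the \emph{continuous} operator $L_C$ rather than from $C$ itself. The only point deserving a sentence in the write-up is the justification of that continuity via the graph-norm Hilbert space structure on $X_1$.
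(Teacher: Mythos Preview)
Your proof is correct and follows essentially the same route as the paper's: both reduce to the one-line computation $(C^{\diamond}y)(x)=\langle y|Cx\rangle_{Y}=\sum_{k}\langle y_{k}|C_{k}x\rangle_{Y_{k}}=\sum_{k}(C_{k}^{\diamond}y_{k})(x)$. The only difference is that you spell out why $C^{\diamond}$ and the $C_{k}^{\diamond}$ are well-defined via the graph-norm continuity of $L_{C}$ and $L_{C_{k}}$, which the paper leaves implicit.
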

\begin{proof}
We have for $x\in X_{1}$, $y=\left(\begin{array}{c}
y_{1}\\
\vdots\\
y_{n}
\end{array}\right)=y_{1}\oplus\cdots\oplus y_{n}\in Y$,
\[
(C^{\diamond}y)(x)=\left\langle y|Cx\right\rangle _{Y}=\sum_{k=1}^{n}\left\langle y_{k}|C_{k}x\right\rangle _{Y_{k}}=\sum_{k=1}^{n}\left(C_{k}^{\diamond}y_{k}\right)\left(x\right),
\]
hence, $C^{\diamond}y=\sum_{k=1}^{n}\, C_{k}^{\diamond}y_{k}$. 
\end{proof}
The latter two lemmas yield the proof of the following theorem:
\begin{thm}
\label{thm:AGDS-adjoint}Let $C$ generate an abstract $\grad$--$\dive$
system with $C=\left(\begin{array}{c}
C_{1}\\
\vdots\\
C_{n}
\end{array}\right)$. Then 
\begin{align*}
C^{*} & =\left(\begin{array}{ccc}
C_{1}^{\diamond} & \cdots & C_{n}^{\diamond}\end{array}\right)\cap\left(Y\oplus X_{0}\right)\\
 & =\left\{ \left(\left(y_{1},\ldots,y_{n}\right),x\right)\in Y\oplus X_{0}\,\left|\, x=\sum_{k=1}^{n}C_{k}^{\diamond}y_{k}\in X_{0}\right.\right\} .
\end{align*}

\end{thm}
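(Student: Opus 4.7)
The plan is to read the theorem off directly from the two preceding lemmas, with essentially no additional argument required. First I would invoke Lemma~\ref{lem:adjoint-dual} applied to the closed densely defined operator $C\colon X_{1}\subseteq X_{0}\to Y$, which immediately yields
\[
C^{*}=C^{\diamond}\cap(Y\oplus X_{0})=\bigl\{(y,x)\in Y\oplus X_{0}\,\bigl|\,C^{\diamond}y=x\bigr.\bigr\},
\]
the equality being understood in $X_{1}'$. Next I would substitute the concrete formula for $C^{\diamond}$ obtained in Lemma~\ref{lem:Orth-sum}, namely that under the identification $Y=\bigoplus_{k=1}^{n}Y_{k}$ the operator $C^{\diamond}$ acts as the row $(C_{1}^{\diamond}\;\cdots\;C_{n}^{\diamond})$, sending $(y_{1},\ldots,y_{n})$ to $\sum_{k=1}^{n}C_{k}^{\diamond}y_{k}\in X_{1}'$. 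Plugging this representation into the previous display gives the first equality in the theorem.

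For the second, set-theoretic formulation, I would simply unfold the intersection. A pair $((y_{1},\ldots,y_{n}),x)\in Y\oplus X_{1}'$ belongs to the graph of the row operator $(C_{1}^{\diamond}\;\cdots\;C_{n}^{\diamond})$ precisely when $x=\sum_{k=1}^{n}C_{k}^{\diamond}y_{k}$ in $X_{1}'$, while intersecting with $Y\oplus X_{0}$ adds the condition $x\in X_{0}$. Collecting both requirements yields exactly the set displayed on the right-hand side of the theorem.

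The only obstacle, and a very mild one, is notational bookkeeping around the Gelfand triple $X_{1}\hookrightarrow X_{0}\cong X_{0}'\hookrightarrow X_{1}'$: one must be careful that the condition ``$\sum_{k=1}^{n}C_{k}^{\diamond}y_{k}\in X_{0}$'' is interpreted via the canonical embedding of $X_{0}$ into $X_{1}'$, and that the equality $C^{\diamond}y=x$ is read in $X_{1}'$ throughout. Once these identifications are made explicit, the theorem is an immediate corollary of Lemmas~\ref{lem:adjoint-dual} and~\ref{lem:Orth-sum}, so I would expect the written proof to be a single sentence appealing to these two results.
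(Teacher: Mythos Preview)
Your proposal is correct and matches the paper's approach exactly: the paper simply states that the theorem follows from Lemmas~\ref{lem:adjoint-dual} and~\ref{lem:Orth-sum} without writing out any further argument. Your additional remark on the Gelfand-triple bookkeeping is accurate and a helpful clarification, but the paper does not spell this out either.
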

Let us apply the latter theorem to the case $C=\interior\grad$, as
it was defined in Example \ref{exa:gradient}.
\begin{example}
Let $X_{0}=L^{2}(\Omega),X_{1}\coloneqq\interior H_{1}(\Omega)$ and
$Y_{k}\coloneqq L^{2}(\Omega)$ for $k\in\{1,\ldots,n\}$ and some
non-empty, open set $\Omega\subseteq\mathbb{R}^{n}.$ Recall that
$C=\interior\grad$ generates an abstract $\grad$-$\dive$-system.
As in this case, $C=\left(\begin{array}{c}
C_{1}\\
\vdots\\
C_{n}
\end{array}\right)$ with $C_{k}:\interior H_{1}(\Omega)\subseteq L^{2}\left(\Omega\right)\to L^{2}(\Omega)$
is defined as $C_{k}f=\partial_{k}f$ for $f\in\interior H_{1}(\Omega),$
we obtain in particular that 
\[
\left(C_{k}^{\diamond}g\right)(f)=\langle g|\partial_{k}f\rangle_{L^{2}(\Omega)}=-\left(\partial_{k}g\right)(f)
\]
for $g\in L^{2}(\Omega),f\in\interior C_{\infty}(\Omega),$ where
here $\partial_{k}g$ is meant in the sense of distributions. Here
$\interior C_{\infty}(\Omega)$ denotes the set of arbitrarily differentiable
functions with compact support in $\Omega$. In consequence 
\[
C^{\diamond}\left(\begin{array}{c}
g_{1}\\
\vdots\\
g_{n}
\end{array}\right)=-\sum_{k=1}^{n}\partial_{k}g_{k}
\]
and thus, $C^{\ast}=-\dive$ is the negative distributional divergence
on $L^{2}(\Omega)$ vector-fields restricted to those, whose divergence
is representable as an $L^{2}(\Omega)$-function. 

An immediate application of Theorem \ref{thm:AGDS-adjoint} is the
following corollary, which will be of interest in the next section:\end{example}
\begin{cor}
\label{cor:adjoint is res}Let $C$ generate an abstract $\grad$-$\dive$-system.
Assume that there exists a closed, densely defined operator $\interior C_{1}$
with
\[
\left(\begin{array}{c}
\interior C_{1}\\
0\\
\vdots\\
0
\end{array}\right)\subseteq\left(\begin{array}{c}
C_{1}\\
\vdots\\
C_{n}
\end{array}\right)=C.
\]
Then 
\[
C^{*}=C^{\diamond}\cap\left(\interior C_{1}^{*}\:0\cdots\:0\right)\subseteq\left(\interior C_{1}^{*}\:0\cdots\:0\right).
\]
\end{cor}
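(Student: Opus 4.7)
The plan is to reduce the corollary to Theorem \ref{thm:AGDS-adjoint}, which already gives $C^{*}=C^{\diamond}\cap(Y\oplus X_{0})$. The only content of the corollary is therefore that on the restricted domain $Y\oplus X_{0}$, the identification $C^{\diamond}=\bigl(C_{1}^{\diamond}\;\cdots\;C_{n}^{\diamond}\bigr)$ from Lemma \ref{lem:Orth-sum} collapses to $\bigl(\interior C_{1}^{*}\;0\;\cdots\;0\bigr)$. The second inclusion in the displayed formula is trivial (an intersection is contained in either factor), so the whole work is to establish the equality
\[
C^{\diamond}\cap(Y\oplus X_{0})=C^{\diamond}\cap\bigl(\interior C_{1}^{*}\;0\;\cdots\;0\bigr).
\]

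For the inclusion $\subseteq$ I would take $(y,x)\in C^{*}$ with $y=(y_{1},\ldots,y_{n})\in Y$ and test the defining identity $\langle y\,|\,C\phi\rangle_{Y}=\langle x\,|\,\phi\rangle_{X_{0}}$ only against vectors $\phi\in D(\interior C_{1})$. The hypothesis gives that such $\phi$ lie in $X_{1}=D(C)$ with $C_{1}\phi=\interior C_{1}\phi$ and $C_{k}\phi=0$ for $k\geq2$, so the left-hand side reduces to $\langle y_{1}\,|\,\interior C_{1}\phi\rangle_{Y_{1}}$. Since $\interior C_{1}$ is densely defined, this yields $y_{1}\in D(\interior C_{1}^{*})$ with $\interior C_{1}^{*}y_{1}=x$, i.e. $(y,x)\in\bigl(\interior C_{1}^{*}\;0\;\cdots\;0\bigr)$, as required.

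The reverse inclusion $\supseteq$ is essentially a tautology: if $(y,x)\in C^{\diamond}\cap\bigl(\interior C_{1}^{*}\;0\;\cdots\;0\bigr)$, then $x=\interior C_{1}^{*}y_{1}\in X_{0}$, so $(y,x)\in Y\oplus X_{0}$, and Theorem \ref{thm:AGDS-adjoint} puts it into $C^{*}=C^{\diamond}\cap(Y\oplus X_{0})$. Combining the two inclusions with Theorem \ref{thm:AGDS-adjoint} then gives the claimed equality, and the final containment in $\bigl(\interior C_{1}^{*}\;0\;\cdots\;0\bigr)$ is immediate.

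There is no real obstacle here; the only subtlety worth stressing is the notational one, namely writing out the graph of the row-matrix operator $\bigl(\interior C_{1}^{*}\;0\;\cdots\;0\bigr)\colon Y\to X_{0}$ as $\{((y_{1},\ldots,y_{n}),\interior C_{1}^{*}y_{1})\mid y_{1}\in D(\interior C_{1}^{*}),\,y_{k}\in Y_{k}\text{ for }k\geq2\}$, so that the set-theoretic intersection with $C^{\diamond}$ actually makes sense and matches the operator-theoretic interpretation used in Theorem \ref{thm:AGDS-adjoint}.
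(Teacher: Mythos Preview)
Your argument is correct and is essentially the paper's proof spelled out: the paper condenses everything into the single observation $C^{\diamond}\subseteq\bigl(\interior C_{1}^{\diamond}\;0\;\cdots\;0\bigr)$ (the dual of the hypothesis) and then intersects with $Y\oplus X_{0}$ via Theorem~\ref{thm:AGDS-adjoint} and Lemma~\ref{lem:adjoint-dual}, whereas you carry out the same computation directly at the level of adjoints by testing against $\phi\in D(\interior C_{1})$. The content is identical.
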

\begin{proof}
It suffices to observe that $C^{\diamond}\subseteq\left(\interior C_{1}^{\diamond}\:0\cdots\:0\right)$
and to apply Theorem \ref{thm:AGDS-adjoint}.
\end{proof}
The next statement contains a typical situation of operators of the
form $C=\left(\begin{array}{c}
C_{1}\\
\vdots\\
C_{n}
\end{array}\right)$ giving rise to the generation of abstract $\grad$--$\dive$ systems.
\begin{prop}
\label{prop:standard_grad-div}Let $X_{0},Y_{0},\ldots,Y_{n}$ be
Hilbert spaces, $C_{0}\colon D(C_{0})\subseteq X_{0}\to Y_{0}$ densely
defined and closed. Denote $X_{1}\coloneqq\left(D(C_{0}),\sqrt{|\cdot|^{2}+|C_{0}\cdot|^{2}}\right)$
and let $C_{k}\in L(X_{1},Y_{k}),$ $k\in\{1,\ldots,n\}$. Then
\[
C=\left(\begin{array}{c}
C_{0}\\
\vdots\\
C_{n}
\end{array}\right)\colon D(C_{0})\subseteq X_{0}\to\bigoplus_{k\in\{0,\ldots,n\}}Y_{k}
\]
generates an abstract $\grad$--$\dive$ system.\end{prop}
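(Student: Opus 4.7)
The plan is to verify the three structural requirements in the definition of an abstract $\grad$--$\dive$ system for the operator $C$: that its codomain is presented as an orthogonal direct sum, that $C$ is densely defined, and that $C$ is closed. The first is immediate by construction since $\bigoplus_{k\in\{0,\ldots,n\}} Y_k$ is already written as such a sum. Density of $D(C)=D(C_0)$ in $X_0$ is inherited verbatim from the assumption that $C_0$ is densely defined. So the only real content is the closedness of $C$, and that is where I would spend essentially all of the proof.

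To prove closedness I would pick a sequence $(x_m)_m$ in $D(C_0)$ such that $x_m \to x$ in $X_0$ and $Cx_m \to (y_0,\ldots,y_n)$ in $\bigoplus_{k=0}^n Y_k$, and show that $x\in D(C_0)$ and $Cx=(y_0,\ldots,y_n)$. Componentwise convergence in the orthogonal sum gives $C_0 x_m \to y_0$ in $Y_0$, so by closedness of $C_0$ we immediately get $x \in D(C_0)$ and $C_0 x = y_0$. The same componentwise convergence combined with $x_m \to x$ in $X_0$ and $C_0 x_m \to C_0 x$ in $Y_0$ says precisely that $x_m \to x$ with respect to the graph norm, that is, in $X_1$. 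Since each $C_k$ with $k \in \{1,\ldots,n\}$ is by hypothesis continuous from $X_1$ into $Y_k$, we get $C_k x_m \to C_k x$ in $Y_k$, forcing $y_k = C_k x$. Hence $Cx=(y_0,\ldots,y_n)$, and $C$ is closed.

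I do not expect any serious obstacle: the hypothesis that $X_1$ is equipped with the graph norm of $C_0$ is precisely what is needed to transfer $X_0$-convergence of $(x_m)$ plus $Y_0$-convergence of $(C_0 x_m)$ into $X_1$-convergence, which in turn activates continuity of the $C_k$ for $k\geq 1$. The only mild subtlety is to notice that we obtain graph convergence for free from the joint $X_0$- and $Y_0$-convergence, so nothing has to be extracted by a subsequence argument. Once closedness is established, the proposition follows directly from the definition of an abstract $\grad$--$\dive$ system.
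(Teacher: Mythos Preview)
Your proof is correct and follows essentially the same route as the paper: density is immediate from $D(C)=D(C_{0})$, and closedness is obtained by using closedness of $C_{0}$ on the zeroth component to upgrade $X_{0}$-convergence to $X_{1}$-convergence, after which continuity of the $C_{k}$ for $k\geq 1$ finishes the argument.
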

\begin{proof}
As $C_{0}$ is densely defined, so is $C$. Thus, closedness of $C$
is the only thing remaining to check. For this let $(x_{j})_{j\in\mathbb{N}}$
be a sequence in $D(C_{0})$ convergent in $X_{0}$ its limit being
denoted by $x\in X_{0}$ and with the property that $y_{k}\coloneqq\lim_{j\to\infty}C_{k}x_{j}$
exists in $Y_{k}$, $j\in\{0,\ldots,n\}$. By the closedness of $C_{0}$,
we infer $x\in D(C_{0})$ and $C_{0}x=y_{0}$. Thus, $x_{j}\to x$
in $X_{1}$ as $j\to\infty$. The continuity of $C_{k}$ for all $k\in\{1,\ldots,n\}$
yields the assertion. \end{proof}
\begin{rem}
Note that the operators $C_{k}$, $k\in\{1,\ldots,n\}$, need not
be closable with domain $D(C_{0})$. The following example illustrates
this fact. Take $X_{0}\coloneqq L^{2}(0,1)$, $C_{0}\coloneqq\partial\colon H^{1}(0,1)\subseteq L^{2}(0,1)\to L^{2}(0,1),f\mapsto f'$
with $f'$ denoting the distributional derivative. Then $C_{0}$ is
densely defined and closed. It is known that $H_{1}(0,1)\subseteq C[0,1]$
(Sobolev embedding theorem). Hence, $C_{1}f\coloneqq\delta_{\{1/2\}}f\coloneqq f(1/2)$
for $f\in H_{1}(0,1)$ defines a continuous linear operator from $H_{1}(0,1)$
to $\mathbb{C}$. It is easy to see that $C_{1}\colon H_{1}(0,1)\subseteq L^{2}(0,1)\to L^{2}(0,1)$
is \emph{not }closable. 
\end{rem}

\section{Some Applications}

Before we come to specific applications, we introduce some classical
differential operators, which we will need in their description. Throughout,
let $\Omega$ be a non-empty open subset of $\mathbb{R}^{n}$, $n\in\mathbb{N}$.
\begin{defn}
We define the operator $\interior\grad$ as the closure of the operator
\begin{align*}
\interior C_{\infty}(\Omega)\subseteq L^{2}(\Omega) & \to L^{2}(\Omega)^{n}\\
\phi & \mapsto(\partial_{i}\phi)_{i\in\{1,\ldots,n\}}.
\end{align*}
Obviously, $D(\interior\grad)=\interior H_{1}(\Omega)$ and thus,
this definition coincides with the previous definition given in Example
\ref{exa:gradient}. Similarly, we define $\interior\dive$ as the
closure of 
\begin{align*}
\interior C_{\infty}(\Omega)^{n}\subseteq L^{2}(\Omega)^{n} & \to L^{2}(\Omega)\\
(\phi_{i})_{i\in\{1,\ldots,n\}} & \mapsto\sum_{i=1}^{n}\partial_{i}\phi_{i}.
\end{align*}
Moreover, we define the gradient on vector-fields, denoted by the
same symbol $\interior\grad$, as the closure of 
\begin{align*}
\interior C_{\infty}(\Omega)^{n}\subseteq L^{2}(\Omega)^{n} & \to L^{2}(\Omega)^{n\times n}\\
(\phi_{i})_{i\in\{1,\ldots,n\}} & \mapsto(\partial_{k}\phi_{i})_{i,k\in\{1,\ldots,n\}}
\end{align*}
and likewise the divergence of matrix-valued functions, again denoted
by $\interior\dive,$ as the closure of 
\begin{align*}
\interior C_{\infty}(\Omega)^{n\times n}\subseteq L^{2}(\Omega)^{n\times n} & \to L^{2}(\Omega)^{n}\\
\left(\phi_{ik}\right)_{i,k\in\{1,\ldots,n\}} & \mapsto\left(\sum_{k=1}^{n}\partial_{k}\phi_{ik}\right)_{i\in\{1,\ldots,n\}}.
\end{align*}
In the special case $n=3$ we define the operator $\interior\curl$
as the closure of 
\begin{align*}
\interior C_{\infty}(\Omega)^{3}\subseteq L^{2}(\Omega)^{3} & \to L^{2}(\Omega)^{3}\\
\left(\begin{array}{c}
\phi_{1}\\
\phi_{2}\\
\phi_{3}
\end{array}\right) & \mapsto\left(\begin{array}{c}
\partial_{2}\phi_{3}-\partial_{3}\phi_{2}\\
\partial_{3}\phi_{1}-\partial_{1}\phi_{3}\\
\partial_{1}\phi_{2}-\partial_{2}\phi_{1}
\end{array}\right).
\end{align*}

By integration by parts we derive the following relations 
\begin{align*}
\interior\grad & \subseteq-(\interior\dive)^{\ast}\eqqcolon\grad,\\
\interior\dive & \subseteq-(\interior\grad)^{\ast}\eqqcolon\dive,\\
\interior\curl & \subseteq(\interior\curl)^{\ast}\eqqcolon\curl.
\end{align*}
It is easy to see that for the scalar-valued gradient, we have $D(\grad)=H_{1}(\Omega)$
and thus, the definition of $\grad$ coincides with the previous one
in Example \ref{exa:gradient}. We recall that $u\in D(\interior\grad)$
satisfies $u|_{\partial\Omega}=0,$ $v\in D(\interior\dive)$ satisfies
$v|_{\partial\Omega}\cdot n=0$ and $w\in D(\interior\curl)$ satisfies
$w|_{\partial\Omega}\times n=0$ for domains $\Omega$ with smooth
boundary, where $n$ denotes the unit outward normal vector field
on $\partial\Omega.$ As the operators $\interior\grad,\,\interior\dive$
and $\interior\curl$ can be defined for arbitrary open sets $\Omega$,
we will use them to formulate the respective generalized boundary
conditions, which do not need to make use of classical boundary traces.
\end{defn}
Beyond the aforementioned spatial differential operators, we need
a suitable realization of the temporal derivative, see also \cite[Example 2.3]{PDE_DeGruyter}
or \cite[Section 2]{Kalauch2011} for a brief discussion.
\begin{defn}
Let $\rho>0$ and $H$ be a Hilbert space. We consider the weighted
$L^{2}$-space 
\[
H_{\rho,0}(\mathbb{R};H)\coloneqq\left\{ f:\mathbb{R}\to H\,|\, f\mbox{ measurable, }\int_{\mathbb{R}}|f(t)|_{H}^{2}e^{-2\rho t}\mbox{ d}t<\infty\right\} 
\]
 of $H$-valued functions and equip it with the following inner product
\[
\langle f|g\rangle_{\rho,0}\coloneqq\intop_{\mathbb{R}}\langle f(t)|g(t)\rangle_{H}e^{-2\rho t}\mbox{ d}t.
\]
We define the derivative $\partial_{0,\rho}$ on $H_{\rho,0}(\mathbb{R};H)$
as the closure of 
\begin{align*}
\interior C_{\infty}(\mathbb{R};H)\subseteq H_{\rho,0}(\mathbb{R};H) & \to H_{\rho,0}(\mathbb{R};H)\\
\phi & \mapsto\phi'.
\end{align*}

If the choice of $\rho$ is clear from the context, we will omit the
index $\rho$ and just write $\partial_{0}$.\end{defn}
\begin{rem}
(a) The subscript $0$ in the operator $\partial_{0}$ should remind
of the ``zero'th'' coordinate, which in Physics' literature often
plays the role of the time-derivative. We shall adopt this custom
here. 

(b) By definition the operator $e^{-\rho m}:H_{\rho,0}(\mathbb{R};H)\to L^{2}(\mathbb{R};H),f\mapsto(t\mapsto e^{-\rho t}f(t))$
is unitary. Consequently $\mathcal{L}_{\rho}\coloneqq\mathcal{F}e^{-\rho m}:H_{\rho,0}(\mathbb{R};H)\to L^{2}(\mathbb{R};H)$
is unitary, where we denote by $\mathcal{F}:L^{2}(\mathbb{R};H)\to L^{2}(\mathbb{R};H)$
the unitary Fourier-transform given by 
\[
\left(\mathcal{F}\phi\right)(x)\coloneqq\frac{1}{\sqrt{2\pi}}\intop_{\mathbb{R}}e^{-\i xy}\phi(y)\mbox{ d}y\quad(x\in\mathbb{R},\phi\in\interior C_{\infty}(\mathbb{R};H)).
\]
One can show that
\begin{equation}
\partial_{0}=\mathcal{L}_{\rho}^{\ast}\left(\i m+\rho\right)\mathcal{L}_{\rho},\label{eq:unitary_equiv}
\end{equation}
where $m:D(m)\subseteq L^{2}(\mathbb{R};H)\to L^{2}(\mathbb{R};H),f\mapsto\left(t\mapsto tf(t)\right)$
with maximal domain, yielding that $m$ is self-adjoint, and, by \eqref{eq:unitary_equiv},
proving that $\partial_{0}$ has is continuously invertible. Moreover,
from \eqref{eq:unitary_equiv} we read off that $\sigma(\partial_{0}),$
the spectrum of $\partial_{0}$, coincides with $i\left[\mathbb{R}\right]+\rho$.

Let $\rho_{0}>0$ and $M:B_{\mathbb{C}}\left(\frac{1}{2\rho_{0}},\frac{1}{2\rho_{0}}\right)\to L(H)$
be analytic such that 
\begin{equation}
\Re\langle z^{-1}M(z)x|x\rangle_{H}\geq0\label{eq:nn}
\end{equation}
for every $z\in B_{\mathbb{C}}\left(\frac{1}{2\rho_{0}},\frac{1}{2\rho_{0}}\right),\, x\in H.$
Defining 
\[
M\left(\frac{1}{\i m+\rho}\right):L^{2}(\mathbb{R};H)\to L^{2}(\mathbb{R};H),\, f\mapsto\left(t\mapsto M\left(\frac{1}{\i t+\rho}\right)f(t)\right)
\]
for $\rho>\rho_{0}$ we can generate operator-valued multipliers and
via $M(\partial_{0}^{-1})\coloneqq\mathcal{L}_{\rho}^{\ast}M\left(\frac{1}{\i m+\rho}\right)\mathcal{L}_{\rho}$
corresponding operator-valued functions of $\partial_{0}^{-1}$. We
shall loosely refer to such operators $M\left(\partial_{0}^{-1}\right)$
as material law operators.
\end{rem}
We quote the following well-posedness result.
\begin{thm}[{\cite[Theorem 6.2.5]{PDE_DeGruyter}, \cite[Solution Theory]{Pi2009-1}}]
\label{thm:sol_th}Let $H$ be a Hilbert space and $A:D(A)\subseteq H\to H$
a skew-selfadjoint linear operator. Moreover, let $\rho_{0}>0$ and
\textup{$M:B_{\mathbb{C}}\left(\frac{1}{2\rho_{0}},\frac{1}{2\rho_{0}}\right)\to L(H)$}
be analytic such that there exists $c>0$ with 
\begin{equation}
\Re\langle z^{-1}M(z)x|x\rangle_{H}\geq c|x|_{H}^{2}\label{eq:pos-def}
\end{equation}
for every $z\in B_{\mathbb{C}}\left(\frac{1}{2\rho_{0}},\frac{1}{2\rho_{0}}\right),\, x\in H.$
Then for each $\rho>\rho_{0}$ the operator $\left(\overline{\partial_{0}M(\partial_{0}^{-1})+A}\right)^{-1}$
is boundedly invertible on $H_{\rho,0}(\mathbb{R};H)$ and causal.\end{thm}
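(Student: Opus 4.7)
The plan is to reduce the theorem to a pointwise Fourier--Laplace symbol calculation and then treat causality separately via an analyticity argument. First, by (\ref{eq:unitary_equiv}) and the definition of $M(\partial_{0}^{-1})$, the operator $\partial_{0}M(\partial_{0}^{-1})$ is unitarily equivalent via $\mathcal{L}_{\rho}$ to multiplication by the operator-valued symbol $t\mapsto(\i t+\rho)M(1/(\i t+\rho))$ on $L^{2}(\mathbb{R};H)$. Since $A$ acts only on the $H$-fibre, it commutes with $\mathcal{L}_{\rho}$, so the problem reduces to showing that the fibre operator $B(t)+A$, with $B(t):=(\i t+\rho)M(1/(\i t+\rho))$, is boundedly invertible on $H$ with norm at most $1/c$, uniformly in $t\in\mathbb{R}$.

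Second, I would verify that $z_{t}:=1/(\i t+\rho)$ lies in the ball $B_{\mathbb{C}}(1/(2\rho_{0}),1/(2\rho_{0}))$ for every $\rho>\rho_{0}$ and $t\in\mathbb{R}$: a short computation gives $|z_{t}-1/(2\rho)|=1/(2\rho)<1/(2\rho_{0})$, so $z_{t}$ lies in the target ball. Specializing hypothesis (\ref{eq:pos-def}) to $z=z_{t}$ and using $z_{t}^{-1}=\i t+\rho$ yields the fibre estimate $\Re\langle B(t)x|x\rangle_{H}\geq c|x|_{H}^{2}$ for all $x\in H$, $t\in\mathbb{R}$.

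Third, I would combine this with the skew-selfadjointness of $A$. Since $\Re\langle Ax|x\rangle_{H}=0$ on $D(A)$ and $B(t)$ is bounded, $B(t)+A$ is closed on $D(A)$ with $\Re\langle(B(t)+A)x|x\rangle_{H}\geq c|x|_{H}^{2}$. The identity $\Re\langle B(t)^{*}x|x\rangle=\Re\langle B(t)x|x\rangle$ together with $A^{*}=-A$ gives the same lower bound for the adjoint $(B(t)+A)^{*}$. By a standard numerical-range argument both $B(t)+A$ and its adjoint are injective with closed range and satisfy $|(B(t)+A)x|_{H}\geq c|x|_{H}$, so each fibre is boundedly invertible with norm $\leq 1/c$. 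Continuity of $z\mapsto M(z)$ makes $t\mapsto(B(t)+A)^{-1}$ strongly measurable, upgrading this to a bounded multiplication operator on $L^{2}(\mathbb{R};H)$; pulling back by $\mathcal{L}_{\rho}^{*}$ yields bounded invertibility of $\overline{\partial_{0}M(\partial_{0}^{-1})+A}$ on $H_{\rho,0}(\mathbb{R};H)$ with the same norm bound.

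The main obstacle is causality, since it is not visible at the level of individual fibres. Here I would exploit that $z\mapsto(z^{-1}M(z)+A)^{-1}$ is well defined and uniformly bounded by $1/c$ on the entire ball $B_{\mathbb{C}}(1/(2\rho_{0}),1/(2\rho_{0}))$, equivalently on the right half-plane $\{\lambda\in\mathbb{C}:\Re\lambda>\rho_{0}\}$ under $\lambda=z^{-1}$. This bound is independent of $\rho>\rho_{0}$, so the inverse on $H_{\rho,0}(\mathbb{R};H)$ is consistent across $\rho$; a Paley--Wiener-type argument in the spirit of \cite{Pi2009-1} and \cite[Ch.~6]{PDE_DeGruyter} then converts the holomorphy and uniform boundedness of the symbol on a right half-plane into the statement that the inverse maps functions supported in $[a,\infty)$ into themselves for every $a\in\mathbb{R}$, which is the desired causality.
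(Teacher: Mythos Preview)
The paper does not give its own proof of this theorem; it is merely quoted from \cite[Theorem 6.2.5]{PDE_DeGruyter} and \cite[Solution Theory]{Pi2009-1}, so there is nothing in the paper to compare against. Your outline is precisely the standard argument used in those references: conjugation by $\mathcal{L}_{\rho}$ reduces the problem to the multiplication operator with symbol $(\i t+\rho)M((\i t+\rho)^{-1})+A$, the accretivity hypothesis \eqref{eq:pos-def} together with the skew-selfadjointness of $A$ gives uniform fibre-wise invertibility with bound $1/c$, and causality follows from the analyticity and uniform boundedness of the operator-valued resolvent on the half-plane $\Re\lambda>\rho_{0}$ via a Paley--Wiener type theorem. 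Your check that $z_{t}=(\i t+\rho)^{-1}$ lies on $\partial B_{\mathbb{C}}(1/(2\rho),1/(2\rho))\subseteq B_{\mathbb{C}}(1/(2\rho_{0}),1/(2\rho_{0}))$ is correct, and the numerical-range argument for the fibre operators is the right one; the only points left implicit (the precise relation between the closure $\overline{\partial_{0}M(\partial_{0}^{-1})+A}$ and the fibre operators, and the exact Paley--Wiener statement) are handled in the cited sources you already invoke.
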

\begin{rem}
(a) In many applications, $M$ has the simple form $M(z)=M_{0}+zM_{1}$
for some $M_{0},M_{1}\in L(H).$ Condition \eqref{eq:pos-def} can
then for example be achieved for $\rho_{0}$ large enough, if $M_{0}$
is selfadjoint and there exist $c_{0},c_{1}>0$ such that 
\[
\langle M_{0}x|x\rangle_{H}\geq c_{0}|x|_{H}^{2}\mbox{ and }\Re\langle M_{1}y|y\rangle_{H}\geq c_{1}|y|_{H}^{2}
\]
for every $x$ belonging to $M_{0}[H]$, the range of $M_{0}$, and
every $y$ from $N(M_{0})$, the nullspace of $M_{0}$. The corresponding
material law operator is then 
\[
M(\partial_{0}^{-1})=M_{0}+\partial_{0}^{-1}M_{1}.
\]

(b) For the interested reader, we shall also mention possible generalization
of Theorem \ref{thm:sol_th} to non-autonomous equations (\cite{Waurick2013JEE_Non-auto,Waurick2014MMAS_Non-auto})
or non-linear equations (\cite{Trostorff2014a,Trostorff2012})
\end{rem}

\subsection{The Guyer-Krumhansl model of heat conduction.}

The Guyer-Krumhansl model of heat conduction (see e.g.~\cite{PhysRev.133.A1411,PhysRev.148.766,PhysRev.148.778})
consists of two equations: A first equation, the balance equation,
relates the heat $\theta$ to the heat flux $q$ in the way that
\[
\rho c\partial_{0}\theta+\dive q=h,
\]
where $\rho$ and $c$ are certain material parameters and $h$ is
a given source term. The difference to the classical equations of
thermodynamics is the following alternative to Fourier's law:
\begin{align*}
\left(1+\tau_{0}\partial_{0}\right)q & =-\kappa\grad\theta+\mu_{1}\Delta q+\mu_{2}\grad\dive q,
\end{align*}
where $\tau_{\text{0}}$, $\kappa,$ $\mu_{1},$ $\mu_{2}$ are real
numbers modeling the material's properties. We mention that the choices
$\tau_{0}=\mu_{1}=\mu_{2}=0$ recover the standard Fourier law. In
any case, $\kappa$ is assumed to be strictly positive. With this
observation, we may reformulate the modified Fourier's law as
\[
\left(\tau_{0}\kappa^{-1}\partial_{0}+\kappa^{-1}-\mu_{1}\kappa^{-1}\Delta-\mu_{2}\kappa^{-1}\grad\dive\right)q=-\grad\theta.
\]
Here we emphasize that here $\Delta$ -- as does $\grad\dive$ --
acts on vectors with $3$ components. Summarizing, we end up with
a system of the form 
\[
\left(\begin{array}{cc}
\rho c\partial_{0} & \quad\dive\\
\grad & \quad\left(\tau_{0}\kappa^{-1}\partial_{0}+\kappa^{-1}-\mu_{1}\kappa^{-1}\Delta-\mu_{2}\kappa^{-1}\grad\dive\right)
\end{array}\right)\left(\begin{array}{c}
\theta\\
q
\end{array}\right)=\left(\begin{array}{c}
h\\
0
\end{array}\right)
\]
and thus, the block operator matrix to be studied reads as follows:
\[
\left(\begin{array}{cc}
\rho c\partial_{0} & \quad\dive\\
\grad & \quad\left(\tau_{0}\kappa^{-1}\partial_{0}+\kappa^{-1}-\mu_{1}\kappa^{-1}\Delta-\mu_{2}\kappa^{-1}\grad\dive\right)
\end{array}\right).
\]
In the following we show that the latter block operator matrix is
of the form of abstract evolutionary equations as mentioned in the
introduction. Rearranging terms and separating the spatial derivatives
from the time-derivative, the operator becomes
\[
\partial_{0}\left(\begin{array}{cc}
\rho c & 0\\
0 & \tau_{0}\kappa^{-1}
\end{array}\right)+\left(\begin{array}{cc}
0 & 0\\
0 & \kappa^{-1}
\end{array}\right)+\left(\begin{array}{cc}
0 & \quad\dive\\
\grad & \quad\left(-\mu_{1}\kappa^{-1}\Delta-\mu_{2}\kappa^{-1}\grad\dive\right)
\end{array}\right).
\]
In order to obtain easily accessible boundary conditions leading to
the well-posedness of the latter equation, we give a different representation
of the term $\left(-\mu_{1}\kappa^{-1}\Delta-\mu_{2}\kappa^{-1}\grad\dive\right)$
on the basis of our theory of abstract $\grad$-$\dive$-systems.
Beforehand, however, we will show the following lemma of combinatoric
nature:
\begin{lem}
\label{lem:combinatorical}Let $\mu_{2}\in\mathbb{R}$, $\mu_{1},\kappa\in\mathbb{R}_{>0}$
with $\mu_{1}>-\mu_{2}$. Then there exists $C=C^{*}\in L(\mathbb{C}^{3\times3},\mathbb{C}^{3\times3})$
strictly positive definite such that for all $\phi\in C_{\infty}\left(\overline{\Omega}\right)^{3}$
we have%
\footnote{Recall that here $\Delta$ is the Laplacian acting as $\left(\begin{array}{ccc}
\Delta & 0 & 0\\
0 & \Delta & 0\\
0 & 0 & \Delta
\end{array}\right)$ and that $\dive$ on the right-hand side is the row-wise divergence
and the $\grad$ on the right-hand side is the Jacobian of $\phi.$ %
}
\[
\left(\kappa^{-1}\mu_{1}\Delta+\kappa^{-1}\mu_{2}\grad\dive\right)\phi=\dive C\grad\phi.
\]
\end{lem}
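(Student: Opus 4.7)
The plan is to look for $C$ among constant linear combinations of certain orthogonal projections on $\mathbb{C}^{3\times 3}$, exploiting the two pointwise identities $\Delta\phi = \dive\grad\phi$ and $\grad\dive\phi = \dive\bigl((\grad\phi)^{T}\bigr)$, where $\dive$ is taken row-wise on matrices; both are immediate from the definitions and the conventions recorded in the footnote. In view of these, any $C$ built as a fixed linear combination of $M$, $M^{T}$ and $(\trace M)\,I$ will automatically yield an identity of the form $\dive C\grad\phi = p_{1}\Delta\phi + p_{2}\grad\dive\phi$ for some $p_{1},p_{2}$ depending linearly on the parameters of $C$; the nontrivial content of the lemma is to arrange that $C$ is simultaneously self-adjoint and strictly positive definite on $\mathbb{C}^{3\times 3}$ (with respect to the Frobenius inner product).

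Concretely, I would use the Frobenius-orthogonal decomposition
\[
\mathbb{C}^{3\times 3} \;=\; \mathbb{C}\cdot I \;\oplus\; \mathcal{S}_{0} \;\oplus\; \mathcal{A},
\]
where $\mathcal{S}_{0}$ denotes the space of trace-free symmetric matrices and $\mathcal{A}$ the space of antisymmetric matrices. Writing $P_{I},P_{0},P_{a}$ for the associated orthogonal projections and setting $C = \alpha P_{I} + \beta P_{0} + \gamma P_{a}$ with $\alpha,\beta,\gamma \in \mathbb{R}$, the operator $C$ is automatically self-adjoint and is strictly positive definite exactly when $\alpha,\beta,\gamma>0$. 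Elementary calculation gives
\[
\dive P_{I}\grad\phi = \tfrac{1}{3}\grad\dive\phi,\qquad \dive P_{0}\grad\phi = \tfrac{1}{2}\Delta\phi + \tfrac{1}{6}\grad\dive\phi,\qquad \dive P_{a}\grad\phi = \tfrac{1}{2}\bigl(\Delta\phi - \grad\dive\phi\bigr),
\]
so matching coefficients of $\Delta\phi$ and $\grad\dive\phi$ against the target $\kappa^{-1}\mu_{1}\Delta\phi + \kappa^{-1}\mu_{2}\grad\dive\phi$ reduces the entire claim to a $2\times 3$ linear system for $\alpha,\beta,\gamma$.

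The remaining and genuinely combinatorial step is to use the one free parameter in this underdetermined system to make $\alpha,\beta,\gamma$ simultaneously strictly positive. Solving in terms of $\gamma$ yields $\beta = 2\kappa^{-1}\mu_{1} - \gamma$ and $\alpha = \kappa^{-1}(3\mu_{2}-\mu_{1}) + 2\gamma$, so $\alpha,\beta,\gamma > 0$ amounts to choosing $\gamma$ in the open interval with endpoints $\max\bigl\{0,\,\tfrac{1}{2}\kappa^{-1}(\mu_{1}-3\mu_{2})\bigr\}$ and $2\kappa^{-1}\mu_{1}$. Since $\kappa,\mu_{1}>0$, this interval is non-empty in the easy regime $\mu_{1} \leq 3\mu_{2}$; in the delicate regime $\mu_{1} > 3\mu_{2}$ the non-emptiness reduces precisely to $\mu_{1} > -\mu_{2}$, which is the stated hypothesis. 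Any admissible $\gamma$ then yields the required $C$. The main subtlety, and the reason the lemma is not completely trivial, is that a coarser split using only symmetric/antisymmetric parts offers only two parameters, fixed by the matching conditions to $\kappa^{-1}(\mu_{1}\pm\mu_{2})$, and the antisymmetric eigenvalue $\kappa^{-1}(\mu_{1}-\mu_{2})$ need not be positive under the assumption; the further decomposition of the symmetric part into its trace and trace-free components is exactly what provides the extra freedom to absorb this obstruction.
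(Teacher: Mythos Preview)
Your argument is correct and follows essentially the same route as the paper: both use the Frobenius-orthogonal decomposition $\mathbb{C}^{3\times 3}=\mathbb{C}I\oplus\mathcal{S}_{0}\oplus\mathcal{A}$, take $C$ as a real combination of the three corresponding projections, compute the induced coefficients of $\Delta\phi$ and $\grad\dive\phi$, and then exploit the one free parameter to make all three eigenvalues positive. The only cosmetic differences are that the paper routes the $P_{0}$-computation through $\curl\curl$ before converting back to $\Delta-\grad\dive$, and parametrizes the solution family by $\lambda=\alpha_{0}-3\alpha_{2}$ rather than by your $\gamma$; your explicit interval for $\gamma$ and your closing remark on why the two-projection split fails are a slightly cleaner packaging of the same positivity analysis.
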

\begin{proof}
Without restriction assume that $\kappa=1$. As an ansatz, take 
\begin{equation}
C=\alpha_{0}\mathrm{sym}_{0}+\alpha_{1}\mathbb{P}+\alpha_{2}\mathrm{skew}\label{eq:ansatz_for_C}
\end{equation}
for $\alpha_{0},\alpha_{1},\alpha_{2}\in\oi0\infty$ to be determined
later on, where
\[
\mathrm{sym}_{0},\mathrm{sym},\mathbb{P},\mathrm{skew}\colon\mathbb{C}^{3\times3}\to\mathbb{C}^{3\times3}
\]
are defined by 
\begin{align*}
\mathbb{P} & \coloneqq\frac{1}{3}\mathrm{trace}^{*}\mathrm{trace},\\
\mathrm{sym}\:\sigma & \coloneqq\frac{1}{2}\left(\sigma+\sigma^{\top}\right)\quad\left(\sigma\in\mathbb{C}^{3\times3}\right),\\
\mathrm{skew}\:\sigma & \coloneqq\frac{1}{2}\left(\sigma-\sigma^{\top}\right)\quad\left(\sigma\in\mathbb{C}^{3\times3}\right),\\
\mathrm{sym}_{0} & \coloneqq\left(1-\mathbb{P}\right)\mathrm{sym}=\mathrm{sym}\left(1-\mathbb{P}\right)
\end{align*}
with $\mathrm{trace}:\mathbb{C}^{3\times3}\to\mathbb{C},\sigma\mapsto\sum_{i=1}^{3}\sigma_{ii}$.
Observe that 
\begin{align*}
\mathrm{trace}^{*}:\mathbb{C} & \to\mathbb{C}^{3\times3},\\
z & \mapsto z\mathbb{I}_{3\times3},
\end{align*}
where $\mathbb{I}_{3\times3}$ denotes the identity matrix in $\mathbb{C}^{3\times3}$
and, hence, $\mathbb{P}\sigma=\frac{\trace\sigma}{3}\left(\begin{array}{ccc}
1 & 0 & 0\\
0 & 1 & 0\\
0 & 0 & 1
\end{array}\right)$. Observing that $\trace\grad\phi=\dive\phi,$ we get that
\begin{align*}
\dive\mathrm{sym}_{0}\grad\phi & =\dive\left(\frac{1}{2}\left(\partial_{i}\phi_{j}+\partial_{j}\phi_{i}-\frac{2}{3}\delta_{ij}\dive\phi\right)_{i,j\in\{1,2,3\}}\right)\\
 & =\left(\sum_{j=1}^{3}\partial_{j}\frac{1}{2}\left(\partial_{i}\phi_{j}+\partial_{j}\phi_{i}-\delta_{ij}\frac{2}{3}\dive\phi\right)\right)_{i\in\{1,2,3\}}\\
 & =\frac{1}{2}\Delta\phi+\frac{1}{2}\grad\dive\phi-\frac{1}{3}\grad\dive\phi\\
 & =\frac{1}{2}\left(\Delta\phi-\grad\dive\phi\right)+\frac{2}{3}\grad\dive\phi\\
 & =-\frac{1}{2}\curl\curl\phi+\frac{2}{3}\grad\dive\phi
\end{align*}
and
\begin{align*}
\dive\mathbb{P}\grad\phi & =\dive\mathbb{P}\:\left(\partial_{j}\phi_{i}\right)_{i,j\in\{1,2,3\}}\\
 & =\dive\left(\frac{1}{3}\delta_{ij}\dive\phi\right)_{i,j\in\{1,2,3\}}\\
 & =\left(\sum_{j=1}^{3}\partial_{j}\frac{1}{3}\delta_{ij}\dive\phi\right)_{i\in\{1,2,3\}}\\
 & =\frac{1}{3}\grad\dive\phi.
\end{align*}
\begin{comment}
The latter two computations in particular yield 
\begin{align*}
\curl\curl\phi & =-2\dive\mathrm{sym}_{0}\grad\phi+\frac{4}{3}\grad\dive\phi\\
 & =-2\dive\mathrm{sym}_{0}\grad\phi+\frac{4}{3}\grad\dive\phi+4\dive\mathbb{P}\grad\phi-\frac{4}{3}\grad\dive\phi\\
 & =-\dive\left(2\mathrm{sym}_{0}+4\mathbb{P}\right)\grad\phi.
\end{align*}
\end{comment}
Similarly, we compute
\begin{align*}
\dive\mathrm{skew}\grad\phi & =\dive\mathrm{skew}\left(\partial_{k}\phi_{j}\right)_{j,k\in\{1,2,3\}}\\
 & =\frac{1}{2}\dive\left(\partial_{k}\phi_{j}-\partial_{j}\phi_{k}\right)_{j,k\in\{1,2,3\}}\\
 & =\frac{1}{2}\left(\sum_{k=1}^{3}\partial_{k}\left(\partial_{k}\phi_{j}-\partial_{j}\phi_{k}\right)\right)_{j\in\{1,2,3\}}\\
 & =\frac{1}{2}\left(\Delta-\grad\dive\right)\phi=-\frac{1}{2}\curl\curl\phi.
\end{align*}
\begin{comment}
Hence, 
\begin{align*}
\curl\curl\phi & =\frac{1}{2}\curl\curl\phi+\frac{1}{2}\curl\curl\phi\\
 & =-\dive\left(\mathrm{sym}_{0}+2\mathbb{P}\right)\grad\phi-\dive\mathrm{skew}\grad\phi\\
 & =-\dive\left(\mathrm{sym}_{0}+\mathrm{skew}+2\mathbb{P}\right)\grad\phi.
\end{align*}
\end{comment}
{} Thus, choosing $C$ as in \eqref{eq:ansatz_for_C}, we get that
\begin{align*}
\dive C\grad\phi & =\dive\left(\alpha_{0}\mathrm{sym}_{0}+\alpha_{1}\mathbb{P}+\alpha_{2}\mathrm{skew}\right)\grad\phi\\
 & =\alpha_{0}\dive\mathrm{sym}_{0}\grad\phi+\alpha_{1}\dive\mathbb{P}\grad\phi+\alpha_{2}\dive\mathrm{skew}\grad\phi\\
 & =\alpha_{0}\left(-\frac{1}{2}\curl\curl\phi+\frac{2}{3}\grad\dive\phi\right)+\alpha_{1}\frac{1}{3}\grad\dive\phi-\alpha_{2}\frac{1}{2}\curl\curl\phi\\
 & =-\frac{\alpha_{0}+\alpha_{2}}{2}\curl\curl\phi+\frac{2\alpha_{0}+\alpha_{1}}{3}\grad\dive\phi\\
 & =\frac{\alpha_{0}+\alpha_{2}}{2}\left(-\curl\curl+\grad\dive\right)+\left(\frac{2\alpha_{0}+\alpha_{1}}{3}-\frac{\alpha_{0}+\alpha_{2}}{2}\right)\grad\dive\\
 & =\frac{\alpha_{0}+\alpha_{2}}{2}\Delta+\frac{\alpha_{0}+2\alpha_{1}-3\alpha_{2}}{6}\grad\dive.
\end{align*}
Comparing coefficients, we get 
\begin{align*}
\frac{\alpha_{0}+\alpha_{2}}{2} & =\mu_{1},\\
\frac{\alpha_{0}+2\alpha_{1}-3\alpha_{2}}{6} & =\mu_{2}.
\end{align*}
Now, strict positive definiteness of $C$ is equivalent to%
\footnote{Note that $\mathbb{P},\,\mathrm{sym}_{0}$ and $\mathrm{skew}$ are
projections on pairwise orthogonal subspaces of $\mathbb{C}^{3\times3}$.%
} $\alpha_{1},\alpha_{2},\alpha_{3}>0$. Introducing $\lambda\coloneqq\alpha_{0}-3\alpha_{2}\in\mathbb{R}$,
we get
\begin{align*}
\alpha_{1} & =3\left(\mu_{2}-\frac{\lambda}{6}\right),\\
\alpha_{0} & =\frac{3}{2}\left(\mu_{1}+\frac{\lambda}{6}\right),\\
\alpha_{2} & =\frac{1}{2}\left(\mu_{1}-\frac{\lambda}{2}\right).
\end{align*}
If $\mu_{2}\geq0$ then $\lambda=-3\mu_{1}<0$ is a possible choice
for the latter set of equations to obtain strict positive definiteness
of $C$. If $\mu_{2}<0$ then $\lambda=3(\mu_{2}-\mu_{1})$ leads
to $\mu_{1}+\frac{\lambda}{6}=\mu_{1}+\frac{\mu_{2}-\mu_{1}}{2}=\frac{\mu_{1}+\mu_{2}}{2}>0$,
$\mu_{2}-\frac{\lambda}{6}=\mu_{2}-\frac{\mu_{2}-\mu_{1}}{2}=\frac{\mu_{1}+\mu_{2}}{2}>0$
as well as $\mu_{1}-\frac{\lambda}{2}=\mu_{1}-\frac{3}{2}\left(\mu_{2}-\mu_{1}\right)=\frac{5\mu_{1}-3\mu_{2}}{2}>0$,
which also implies that $C$ is strictly positive definite. 
\end{proof}
Lemma \ref{lem:combinatorical} together with the previous remark
provides a way for formulating the Guyer-Krumhansl model for heat
conduction in the framework of evolutionary equations. This, in turn,
results in the following well-posedness theorem, where we have for
sake of definiteness chosen a specific case of boundary conditions:
\begin{thm}
Let $\tau_{0},\rho,\kappa\in\mathbb{R}_{>0},$ $\mu_{1},\mu_{2}\in\mathbb{R}$
with $\mu_{2}>-\mu_{1}$, $\Omega\subseteq\mathbb{R}^{3}$ open and
$C$ as in Lemma \ref{lem:combinatorical}. Let $c\in L(L^{2}(\Omega))$
be selfadjoint and strictly positive definite and $A\colon D(A)\subseteq L^{2}\left(\Omega\right)^{3}\oplus L^{2}\left(\Omega\right)\oplus L^{2}\left(\Omega\right)^{3\times3}\to L^{2}\left(\Omega\right)^{3}\oplus L^{2}\left(\Omega\right)\oplus L^{2}\left(\Omega\right)^{3\times3}$
with
\[
A\psi=\left(\begin{array}{cc}
0 & \left(\begin{array}{cc}
\grad & \quad-\dive\end{array}\right)\\
\left(\begin{array}{c}
\interior\dive\\
-\interior\grad
\end{array}\right) & 0
\end{array}\right)\psi
\]
for all 
\[
\psi\in D(A)\coloneqq\left\{ \left.\left(\begin{array}{c}
\psi_{1}\\
\psi_{2}\\
\psi_{3}
\end{array}\right)\in\left(\begin{array}{c}
L^{2}\left(\Omega\right)^{3}\\
L^{2}\left(\Omega\right)\\
L^{2}\left(\Omega\right)^{3\times3}
\end{array}\right)\,\right|\,\psi_{1}\in D(\interior\grad),\grad\psi_{2}-\dive\psi_{3}\in L^{2}(\Omega)^{3}\right\} .
\]
Then the (closure of the) operator 
\[
\mathcal{T}\coloneqq\partial_{0}\left(\begin{array}{ccc}
\tau_{0}\kappa^{-1} & 0 & 0\\
0 & \rho c & 0\\
0 & 0 & 0
\end{array}\right)+\left(\begin{array}{ccc}
\kappa^{-1} & 0 & 0\\
0 & 0 & 0\\
0 & 0 & C^{-1}
\end{array}\right)+\left(\begin{array}{cc}
0 & \left(\begin{array}{cc}
\grad & \quad-\dive\end{array}\right)\\
\left(\begin{array}{c}
\interior\dive\\
-\interior\grad
\end{array}\right) & 0
\end{array}\right)
\]
is continuously invertible in $H_{\rho,0}\left(\mathbb{R};L^{2}\left(\Omega\right)^{3}\oplus L^{2}\left(\Omega\right)\oplus L^{2}\left(\Omega\right)^{3\times3}\right)$
for sufficiently large $\rho>0$. \end{thm}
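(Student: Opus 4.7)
The plan is to rewrite $\mathcal{T}$ in the standard form $\partial_{0}M(\partial_{0}^{-1})+A$ with the linear material law $M(z)=M_{0}+zM_{1}$, where
\[
M_{0}=\begin{pmatrix}\tau_{0}\kappa^{-1} & 0 & 0\\ 0 & \rho c & 0\\ 0 & 0 & 0\end{pmatrix},\quad M_{1}=\begin{pmatrix}\kappa^{-1} & 0 & 0\\ 0 & 0 & 0\\ 0 & 0 & C^{-1}\end{pmatrix},
\]
and then to apply Theorem \ref{thm:sol_th} in combination with Remark~(a) following it. Two ingredients need to be verified: skew-self\-adjointness of $A$, and the positive-definiteness condition (\ref{eq:pos-def}) for $M$.

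For the first, I would introduce
\[
\tilde C\coloneqq\begin{pmatrix}\interior\dive\\ -\interior\grad\end{pmatrix}\colon D(\interior\grad)\subseteq L^{2}(\Omega)^{3}\to L^{2}(\Omega)\oplus L^{2}(\Omega)^{3\times 3},
\]
where $\interior\grad$ denotes the vector-field version. The pointwise inequality $|\interior\dive\phi|^{2}\leq 3|\interior\grad\phi|^{2}$ for smooth $\phi$ shows that $\interior\dive$ extends continuously from the graph space of $\interior\grad$ into $L^{2}(\Omega)$. Hence Proposition \ref{prop:standard_grad-div} (after an obvious reordering of components) guarantees that $\tilde C$ is closed, densely defined, and generates an abstract $\grad$--$\dive$ system. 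Theorem \ref{thm:AGDS-adjoint}, combined with the distributional identifications $\interior\dive^{\diamond}=-\grad$ and $(-\interior\grad)^{\diamond}=\dive$ (the latter being row-wise matrix divergence), then yields
\[
\tilde C^{*}(\psi_{2},\psi_{3})=-\grad\psi_{2}+\dive\psi_{3}
\]
with domain precisely $\{(\psi_{2},\psi_{3})\,:\,\grad\psi_{2}-\dive\psi_{3}\in L^{2}(\Omega)^{3}\}$. This coincides with the $D(A)$ written into the statement, so $A=\begin{pmatrix}0 & -\tilde C^{*}\\ \tilde C & 0\end{pmatrix}$ is skew-self\-adjoint by the general principle (\ref{eq:skew-canon}).

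For the material law, $M_{0}$ is self\-adjoint (since $c$ is) and its range equals $L^{2}(\Omega)^{3}\oplus L^{2}(\Omega)\oplus\{0\}$; on this range $\langle M_{0}x,x\rangle\geq \min(\tau_{0}\kappa^{-1},\rho c_{0})|x|^{2}$, where $c_{0}>0$ bounds $c$ from below. The null space of $M_{0}$ is $\{0\}\oplus\{0\}\oplus L^{2}(\Omega)^{3\times 3}$, on which $M_{1}$ acts as $C^{-1}$; this is self\-adjoint and strictly positive, since Lemma \ref{lem:combinatorical} (applicable thanks to the hypothesis $\mu_{2}>-\mu_{1}$) produces a strictly positive definite $C$. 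Remark~(a) after Theorem \ref{thm:sol_th} thus yields condition (\ref{eq:pos-def}) for all sufficiently large $\rho$, and Theorem \ref{thm:sol_th} concludes. The main technical point is the domain matching in the previous paragraph: once one is comfortable that the abstract adjoint constructed via Theorem \ref{thm:AGDS-adjoint} coincides with the domain constraint hard-coded into $D(A)$, everything else reduces to a routine check of the hypotheses of the solution theory.
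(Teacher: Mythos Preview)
Your proof is correct and follows the same approach as the paper: verify skew-selfadjointness of $A$ by recognizing $\tilde C=\begin{pmatrix}\interior\dive\\-\interior\grad\end{pmatrix}$ as generating an abstract $\grad$--$\dive$ system via Proposition~\ref{prop:standard_grad-div} (with $C_{0}=-\interior\grad$, $C_{1}=\interior\dive$) and invoking Theorem~\ref{thm:AGDS-adjoint}, then apply Theorem~\ref{thm:sol_th} through the positive-definiteness of $c$ and $C$. You in fact supply more detail than the paper does---the pointwise bound $|\interior\dive\phi|^{2}\leq 3|\interior\grad\phi|^{2}$, the explicit identification of $D(\tilde C^{*})$ with the domain constraint in $D(A)$, and the block-by-block check of the material-law hypotheses---none of which is spelled out there.
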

\begin{proof}
First of all we observe that $A$ is skew-selfadjoint. Indeed, $\left(\begin{array}{c}
\interior\dive\\
-\interior\grad
\end{array}\right):\interior H_{1}(\Omega)^{3}\subseteq L^{2}\left(\Omega\right)^{3}\to L^{2}(\Omega)\oplus L^{2}(\Omega)^{3\times3}$ generates an abstract $\grad$-$\dive$-system (use Proposition \ref{prop:standard_grad-div}
with $C_{0}=-\interior\grad$, $D(C_{0})=\interior H_{1}(\Omega)^{3}$
and $C_{1}=\interior\dive$) %
\footnote{As a subtlety not to be missed, here we have taken $\grad:L^{2}\left(\Omega\right)\to\left(\interior H_{1}(\Omega)^{3}\right)^{\prime}$
acting on scalars as the dual $\left(-\interior\dive\right)^{\diamond}$
of $-\interior\dive\colon\interior H_{1}(\Omega)^{3}\to L^{2}(\Omega)$.
This is in contrast to the situation in acoustics, where only the
vanishing of the normal component on the boundary (or a generalization
thereof) is imposed for the domain of $-\interior\dive$. The tensor
divergence occurring here is correspondingly $\dive:L^{2}\left(\Omega\right)^{3\times3}\to\left(\interior H_{1}(\Omega)^{3}\right)^{\prime}$
, the dual of $-\interior\grad$ on vector fields. %
} and the skew-selfadjointness of $A$ follows from Theorem \ref{thm:AGDS-adjoint}.
Therefore, the invertibility of $\overline{\mathcal{T}}$ is guaranteed
by the strict positive definiteness of $c$ and $C$ by Theorem \ref{thm:sol_th}. 
\end{proof}
Thus, for $h\in H_{\rho,0}\left(\mathbb{R};L^{2}\left(\Omega\right)\right)$
and $u=(u_{1},u_{2},u_{3})$ we have 
\[
\left(\partial_{0}\left(\begin{array}{ccc}
\tau_{0}\kappa^{-1} & 0 & 0\\
0 & \rho c & 0\\
0 & 0 & 0
\end{array}\right)+\left(\begin{array}{ccc}
\kappa^{-1} & 0 & 0\\
0 & 0 & 0\\
0 & 0 & C^{-1}
\end{array}\right)+\left(\begin{array}{cc}
0 & \left(\begin{array}{cc}
\grad & \quad-\dive\end{array}\right)\\
\left(\begin{array}{c}
\interior\dive\\
-\interior\grad
\end{array}\right) & 0
\end{array}\right)\right)\left(\begin{array}{c}
u_{1}\\
u_{2}\\
u_{3}
\end{array}\right)=\left(\begin{array}{c}
0\\
h\\
0
\end{array}\right).
\]
This is 
\begin{align*}
\left(\tau_{0}\kappa^{-1}\partial_{0}u_{1}+\kappa^{-1}u_{1}\right)+\grad u_{2}-\dive u_{3} & =0,\\
\rho c\partial_{0}u_{2}+\dive u_{1} & =h,\\
C^{-1}u_{3}-\grad u_{1} & =0.
\end{align*}
Substituting -- as a formal calculation -- the third equation into
the first, we get
\[
\left(\tau_{0}\kappa^{-1}\partial_{0}u_{1}+\kappa^{-1}u_{1}\right)+\grad u_{2}-\dive C\grad u_{1}=0,
\]
which, by Lemma \ref{lem:combinatorical} gives the original form
of the system back
\[
\left(\tau_{0}\kappa^{-1}\partial_{0}u_{1}+\kappa^{-1}u_{1}\right)=\kappa^{-1}\mu_{1}\Delta u_{1}+\kappa^{-1}\mu_{2}\grad\dive u_{1}-\grad u_{2}.\tag*{\qedhere}
\]

\begin{rem}
~
\begin{enumerate}
\item Note that the role of temperature is played by the second block component
$\psi_{2}$ whereas $\psi_{1}$ is the heat flux in this model.
\item The system reveals appropriate possible generalizations for the material
law operator. For example
\[
M\left(\partial_{0}^{-1}\right)=\left(\begin{array}{ccc}
\kappa_{0}^{-1} & 0 & 0\\
0 & c_{0} & 0\\
0 & 0 & 0
\end{array}\right)+\partial_{0}^{-1}\left(\begin{array}{ccc}
\kappa^{-1} & 0 & 0\\
0 & 0 & 0\\
0 & 0 & C^{-1}
\end{array}\right)
\]
where $\kappa,\:\kappa_{0},\, c_{0},\, C$ are a strictly positive
definite \emph{operators} acting in a matching $L^{2}\left(\Omega\right)$-setting
rather than just numbers. 
\item In proper tensorial terms the spatial operator should actually be
written as
\[
\left(\begin{array}{cc}
0 & \left(\begin{array}{cc}
\dive\trace^{\ast} & \quad-\dive\end{array}\right)\\
\left(\begin{array}{c}
\trace\interior\grad\\
-\interior\grad
\end{array}\right) & \left(\begin{array}{cc}
0 & 0\\
0 & 0
\end{array}\right)
\end{array}\right)=\left(\begin{array}{cc}
0 & \dive\left(\begin{array}{cc}
\trace^{\ast} & \quad-1\end{array}\right)\\
\left(\begin{array}{c}
\trace\\
-1
\end{array}\right)\interior\grad & \left(\begin{array}{cc}
0 & 0\\
0 & 0
\end{array}\right)
\end{array}\right).
\]

\item If preferred we may specialize to the symmetric tensor case, compare
\cite{0953-8984-7-7-025,valenti1997heat}, and use 
\[
\left(\begin{array}{cc}
0 & \left(\begin{array}{cc}
\grad & -\dive\iota_{\mathrm{sym}}\end{array}\right)\\
\left(\begin{array}{c}
\interior\dive\\
-\iota_{\mathrm{sym}}^{*}\interior\grad
\end{array}\right) & \left(\begin{array}{cc}
0 & 0\\
0 & 0
\end{array}\right)
\end{array}\right)
\]
or, according to item 3, 
\[
\left(\begin{array}{cc}
0 & \dive\iota_{\mathrm{sym}}\left(\begin{array}{cc}
\mathrm{trace}^{*} & -1\end{array}\right)\\
\left(\begin{array}{c}
\mathrm{trace}\\
-1
\end{array}\right)\iota_{\mathrm{sym}}^{*}\interior\grad & \left(\begin{array}{cc}
0 & 0\\
0 & 0
\end{array}\right)
\end{array}\right)
\]
instead of the spatial operator above. Here we denote by $\iota_{\mathrm{sym}}:L_{\mathrm{sym}}^{2}(\Omega)^{3\times3}\to L^{2}(\Omega)^{3\times3}$
the canonical embedding of 
\[
L_{\mathrm{sym}}^{2}(\Omega)^{3\times3}\coloneqq\left\{ (f_{ij})_{i,j\in\{1,2,3\}}\in L^{2}(\Omega)^{3\times3}\,|\, f_{ij}=f_{ji}\quad(i,j\in\{1,2,3\})\right\} .
\]
In consequence, $\iota_{\mathrm{sym}}^{\ast}:L^{2}(\Omega)^{3\times3}\to L_{\mathrm{sym}}^{2}(\Omega)^{3\times3}$
is the orthogonal projection onto $L_{\mathrm{sym}}^{2}(\Omega)^{3\times3}.$
\end{enumerate}
\end{rem}

\subsection{\label{sub:wave}A class of dynamic boundary conditions.}

In this section, we shall analyze a class of dynamic boundary conditions
related to the wave equation. For this let $\Omega\subseteq\mathbb{R}^{3}$
be open and bounded with Lipschitz boundary $\Gamma\coloneqq\partial\Omega$
and unit outward normal field $n$. Further, we shall assume that
$\Gamma$ is a manifold, where it is possible to define the co-variant
derivative $\grad_{\Gamma}$ as an operator from $L^{2}(\Gamma)$
to $L_{\tau}^{2}(\Gamma)\coloneqq L^{2}\left(\Gamma\right)^{2}$.
By $\gamma\colon H_{1}(\Omega)\to L^{2}(\Gamma),\phi\mapsto\phi|_{\Gamma}$
we denote the trace mapping. Consider the operator equation 
\begin{equation}
\left(\partial_{0}M_{0}+M_{1}+A\right)\left(\begin{array}{c}
p\\
\left(\begin{array}{c}
v\\
\eta_{1}\\
\eta_{2}
\end{array}\right)
\end{array}\right)=\left(\begin{array}{c}
f\\
\left(\begin{array}{c}
g\\
h_{1}\\
h_{2}
\end{array}\right)
\end{array}\right)\in H_{\rho,0}\left(\mathbb{R};L^{2}(\Omega)\oplus L^{2}(\Omega)\oplus L^{2}(\Omega)\oplus L_{\tau}^{2}(\Gamma)\right)\label{eq:acoustic-bdy}
\end{equation}
for $\rho$ sufficiently large and with 
\[
A=\left(\begin{array}{cc}
\left(0\right) & -\left(\begin{array}{c}
\grad\\
\gamma\\
\grad_{\Gamma}\gamma
\end{array}\right)^{*}\\
\left(\begin{array}{c}
\grad\\
\gamma\\
\grad_{\Gamma}\gamma
\end{array}\right) & \left(\begin{array}{ccc}
0 & 0 & 0\\
0 & 0 & 0\\
0 & 0 & 0
\end{array}\right)
\end{array}\right),
\]
where $M_{0}$ is selfadjoint and strictly positive definite on its
range, while $M_{1}$ is strictly positive definite on the null space
of $M_{0}$. Note that by the general solution theory Theorem \ref{thm:sol_th}
well-posedness of the system just discussed is not an issue here,
if we can ensure that $A$ is skew-selfadjoint. For this, we shall
have a closer look at the operator $A$. Defining for this section
\begin{align*}
X_{0} & \coloneqq L^{2}(\Omega),\\
X_{1} & \coloneqq\left(D(\grad_{\Gamma}\gamma),\sqrt{|\grad_{\Gamma}\gamma\cdot|_{L_{\tau}^{2}(\Gamma)}^{2}+|\cdot|_{H_{1}(\Omega)}^{2}}\:\right),
\end{align*}
we realize that \eqref{eq:acoustic-bdy} can be treated as an abstract
$\grad$--$\dive$ system:
\begin{prop}
\label{prop:Acoustic_grad_div}The operator
\[
C\coloneqq\left(\begin{array}{c}
\grad\\
\gamma\\
\grad_{\Gamma}\gamma
\end{array}\right):D\left(\grad_{\Gamma}\gamma\right)\subseteq X_{0}\to L^{2}(\Omega)\oplus L^{2}(\Gamma)\oplus L_{\tau}^{2}(\Gamma)
\]
 generates an abstract $\grad$--$\dive$ system.\end{prop}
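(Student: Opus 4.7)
The plan is to reduce the claim to Proposition~\ref{prop:standard_grad-div} by grouping two of the three rows of $C$ into a single ``big'' closed operator and treating the trace as a bounded perturbation. Concretely, I set
\[
C_{0}\coloneqq\begin{pmatrix}\grad\\ \grad_{\Gamma}\gamma\end{pmatrix}\colon D(\grad_{\Gamma}\gamma)\subseteq L^{2}(\Omega)\to L^{2}(\Omega)^{3}\oplus L_{\tau}^{2}(\Gamma)
\]
and take the trace $\gamma$ with target $L^{2}(\Gamma)$ as the additional bounded component. Once $C_{0}$ is shown to be densely defined and closed and $\gamma$ is seen to be continuous on the graph space of $C_{0}$, Proposition~\ref{prop:standard_grad-div} delivers that the concatenated operator is closed and densely defined; it coincides with $C$ after a trivial reordering of the summands in the codomain.

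Density of $D(C_{0})=D(\grad_{\Gamma}\gamma)$ in $L^{2}(\Omega)$ is immediate: every $\phi\in\interior C_{\infty}(\Omega)$ satisfies $\gamma\phi=0$ and hence $\grad_{\Gamma}\gamma\phi=0$, so $\phi\in D(C_{0})$, and $\interior C_{\infty}(\Omega)$ is dense in $L^{2}(\Omega)$. For closedness of $C_{0}$, I pick a sequence $(u_{j})$ in $D(C_{0})$ with $u_{j}\to u$ in $L^{2}(\Omega)$, $\grad u_{j}\to v$ in $L^{2}(\Omega)^{3}$, and $\grad_{\Gamma}\gamma u_{j}\to z$ in $L_{\tau}^{2}(\Gamma)$. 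Closedness of $\grad$ gives $u\in H_{1}(\Omega)$ with $\grad u=v$, hence $u_{j}\to u$ in the $H_{1}$-norm. Continuity of the trace $\gamma\colon H_{1}(\Omega)\to L^{2}(\Gamma)$, available thanks to the Lipschitz boundary assumption, then implies $\gamma u_{j}\to\gamma u$ in $L^{2}(\Gamma)$, and closedness of $\grad_{\Gamma}$ in turn forces $\gamma u\in D(\grad_{\Gamma})$ with $\grad_{\Gamma}\gamma u=z$. Thus $u\in D(C_{0})$ and $C_{0}u=(v,z)$.

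The graph norm of $C_{0}$ on its domain is $\sqrt{|\cdot|_{L^{2}(\Omega)}^{2}+|\grad\cdot|_{L^{2}(\Omega)}^{2}+|\grad_{\Gamma}\gamma\cdot|_{L_{\tau}^{2}(\Gamma)}^{2}}$, which coincides with $\sqrt{|\cdot|_{H_{1}(\Omega)}^{2}+|\grad_{\Gamma}\gamma\cdot|_{L_{\tau}^{2}(\Gamma)}^{2}}$, the very norm prescribed for $X_{1}$ in the statement. Since $\gamma$ is bounded from $H_{1}(\Omega)$ to $L^{2}(\Gamma)$, it is \emph{a fortiori} bounded from $X_{1}$ to $L^{2}(\Gamma)$, and all hypotheses of Proposition~\ref{prop:standard_grad-div} are met.

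The one delicate point is the closedness chain for $C_{0}$: one has to bootstrap convergence of $(u_{j})$ in $L^{2}(\Omega)$ up to convergence in $H_{1}(\Omega)$ before the closedness of $\grad_{\Gamma}$ becomes applicable to the sequence of traces. This is precisely why $\grad$ must be grouped together with $\grad_{\Gamma}\gamma$ into $C_{0}$; treating $\grad_{\Gamma}\gamma$ by itself as an operator out of $L^{2}(\Omega)$ would in general not yield a closed operator, so the strengthened domain norm involving $|\cdot|_{H_{1}(\Omega)}$ is genuinely needed.
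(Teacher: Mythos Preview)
Your proof is correct. Both you and the paper argue via Proposition~\ref{prop:standard_grad-div}, but with a different choice of the ``anchor'' operator $C_{0}$: the paper takes $C_{0}=\grad_{\Gamma}\gamma$ alone (showing it is closed as an operator \emph{from $H_{1}(\Omega)$}) and then declares $C_{1}=\gamma$, $C_{2}=\grad$; you instead bundle $\grad$ together with $\grad_{\Gamma}\gamma$ into $C_{0}$ and leave only $\gamma$ as the bounded perturbation. Your grouping makes the hypotheses of Proposition~\ref{prop:standard_grad-div} match on the nose, since that proposition requires $C_{0}$ to be closed as an operator from $X_{0}=L^{2}(\Omega)$, and --- exactly as you observe in your last paragraph --- $\grad_{\Gamma}\gamma$ by itself is \emph{not} closed from $L^{2}(\Omega)$ (any sequence in $\interior C_{\infty}(\Omega)$ converging in $L^{2}$ to a non-$H_{1}$ limit is a counterexample). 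The paper's proof is morally the same argument but is less precise on this point; your version is cleaner.
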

\begin{proof}
At first, note that $X_{1}\hookrightarrow X_{0}$ with dense and continuous
embedding, yielding in particular that $C$ is densely defined. Moreover,
as $\gamma\colon H_{1}(\Omega)\to L^{2}(\Gamma)$ is continuous and
$\grad_{\Gamma}:H_{1}(\Gamma)\subseteq L^{2}(\Gamma)\to L_{\tau}^{2}(\Gamma)$
is closed, the operator $\grad_{\Gamma}\gamma\colon D\left(\grad_{\Gamma}\gamma\right)\subseteq H_{1}(\Omega)\to L_{\tau}^{2}(\Gamma)$
is closed as well. Thus, $C$ generates an abstract $\grad$--$\dive$
system by Proposition \ref{prop:standard_grad-div} for the choices
$C_{0}=\grad_{\Gamma}\gamma$, $C_{1}=\gamma$ and $C_{2}=\grad$.
\end{proof}
We shall now focus on the equations satisfied by $\left(\begin{array}{c}
p\\
\left(\begin{array}{c}
v\\
\eta_{1}\\
\eta_{2}
\end{array}\right)
\end{array}\right)$, especially the boundary conditions, which are encoded in the domain
of $A$. 
\begin{lem}
\label{lem:What is gradgg}Let $\grad_{\Gamma}\gamma:X_{1}\to L^{2}\left(\Gamma\right)$.
Then $\left(\grad_{\Gamma}\gamma\right)^{\diamond}=\gamma^{'}\grad_{\Gamma}^{\diamond}$
with $\grad_{\Gamma}\colon H_{1}(\Gamma)\to L_{\tau}^{2}(\Gamma)$
and $\gamma\colon X_{1}\to H_{1}(\Gamma)$ where $\gamma'\colon H_{1}(\Gamma)'\to X_{1}'$
denotes the dual operator of $\gamma$. \end{lem}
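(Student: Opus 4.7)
The plan is to use the factorization $\grad_{\Gamma}\gamma = L_{\grad_{\Gamma}}\circ L_{\gamma}$ together with the functoriality of dualization $(AB)' = B'A'$, and then unfold the definition of the $\diamond$-operation.

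First I would verify that the two factors are indeed continuous linear operators between the Hilbert spaces appearing on the right-hand side of the claimed identity. The operator $\grad_{\Gamma}\colon H_{1}(\Gamma)\to L_{\tau}^{2}(\Gamma)$ is continuous because its $L^2$-graph norm agrees with the $H_{1}(\Gamma)$-norm by construction. The continuity of $\gamma\colon X_{1}\to H_{1}(\Gamma)$ has to be read off from the definition of the norm on $X_{1}$: for $u\in X_{1}$ one has
\[
|\gamma u|_{H_{1}(\Gamma)}^{2}=|\gamma u|_{L^{2}(\Gamma)}^{2}+|\grad_{\Gamma}\gamma u|_{L_{\tau}^{2}(\Gamma)}^{2},
\]
and the first summand is controlled by $|u|_{H_{1}(\Omega)}^{2}$ via the standard trace theorem (applicable since $\Omega$ is bounded and Lipschitz), while the second is by definition part of $|u|_{X_{1}}^{2}$. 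Thus both summands are dominated by a constant times $|u|_{X_{1}}^{2}$.

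Once the factorization $L_{\grad_{\Gamma}\gamma} = L_{\grad_{\Gamma}}\circ\gamma$ is established as a composition of bounded operators $X_{1}\to H_{1}(\Gamma)\to L_{\tau}^{2}(\Gamma)$, the standard identity $(L_{\grad_{\Gamma}}\circ\gamma)' = \gamma'\circ L_{\grad_{\Gamma}}'$ for dual operators of bounded maps gives
\[
(\grad_{\Gamma}\gamma)^{\diamond}
= L_{\grad_{\Gamma}\gamma}'\circ R_{L_{\tau}^{2}(\Gamma)'}
= \gamma'\circ L_{\grad_{\Gamma}}'\circ R_{L_{\tau}^{2}(\Gamma)'}
= \gamma'\circ\grad_{\Gamma}^{\diamond},
\]
where the outermost equality is exactly the definition of $\grad_{\Gamma}^{\diamond}$ for the continuous realization $L_{\grad_{\Gamma}}\colon H_{1}(\Gamma)\to L_{\tau}^{2}(\Gamma)$.

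The only genuinely non-formal step is the continuity check for $\gamma\colon X_{1}\to H_{1}(\Gamma)$, and even that reduces to invoking the trace theorem together with the definition of the $X_{1}$-norm; the rest of the argument is pure categorical bookkeeping of duals and Riesz isomorphisms. No obstacle more serious than getting the domains of the dual operators lined up correctly is anticipated.
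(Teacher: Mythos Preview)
Your proof is correct and follows essentially the same idea as the paper's: both arguments exploit the factorization $\grad_{\Gamma}\gamma = \grad_{\Gamma}\circ\gamma$ and the compatibility of dualization with composition, the only difference being that the paper verifies the identity by a direct three-line computation of $\left((\grad_{\Gamma}\gamma)^{\diamond}\phi\right)(\psi)$ for $\phi\in L_{\tau}^{2}(\Gamma)$ and $\psi\in X_{1}$, while you phrase it abstractly via $(AB)'=B'A'$ and add the explicit continuity check for $\gamma\colon X_{1}\to H_{1}(\Gamma)$ that the paper leaves implicit.
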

\begin{proof}
For $\phi\in L_{\tau}^{2}(\Gamma)$, we compute with $\psi\in D(\grad_{\Gamma}\gamma)=X_{1}$:
\begin{align*}
\left(\left(\grad_{\Gamma}\gamma\right)^{\diamond}\phi\right)(\psi) & =\langle\phi|\grad_{\Gamma}\gamma\psi\rangle_{L_{\tau}^{2}(\Gamma)}\\
 & =\left(\grad_{\Gamma}^{\diamond}\phi\right)(\gamma\psi)\\
 & =\left(\gamma'\grad_{\Gamma}^{\diamond}\phi\right)(\psi).\tag*{{\qedhere}}
\end{align*}
\end{proof}
\begin{rem}
\label{rem:grad-dach, div} For $\dive_{\Gamma}\coloneqq-\grad_{\Gamma}^{*}$,
we have the inclusion $-\gamma'\dive_{\Gamma}\subseteq\left(\grad_{\Gamma}\gamma\right)^{\diamond}.$
In fact, this follows from the previous lemma and the inclusion $-\dive_{\Gamma}\subseteq\grad_{\Gamma}^{\diamond}$,
which holds by Lemma \ref{lem:adjoint-dual}. %
\begin{comment}
For the latter assertion we compute for $\phi\in D(\dive_{\Gamma})$
and $\psi\in D(\grad_{\Gamma}\gamma)$:
\[
\langle\gamma\psi,-\dive_{\Gamma}\phi\rangle_{L^{2}(\Gamma)}=\langle\grad_{\Gamma}\gamma\psi,\phi\rangle_{L^{2}(\Gamma)^{2}}=\langle\gamma\psi,\grad_{\Gamma}^{\diamond}\phi\rangle_{L^{2}(\Gamma)}.
\]
Hence, by the arbitrariness of $\psi$ together with $C_{\infty}\left(\overline{\Omega}\right)\subseteqq D(\grad_{\Gamma}\gamma)$
and the fact that both the inclusions $\gamma[C_{\infty}\left(\overline{\Omega}\right)]\subseteq H_{1/2}(\Gamma)$
and $H_{1/2}(\Gamma)\subseteq L^{2}(\Gamma)$ are dense (the first
density result is due to the fact that $\gamma\colon H_{1}(\Omega)\to H_{1/2}(\Gamma)$
is continuous and that $C_{\infty}\left(\overline{\Omega}\right)\subseteq H_{1}(\Omega)$
since $\Omega$ has Lipschitz boundary), we infer that $-\dive_{\Gamma}\phi=\grad_{\Gamma}^{\diamond}\phi$.
\end{comment}
\end{rem}
\begin{lem}
\label{lem:boundary equation}Let $v\in D(\dive)$, $\eta_{1}\in L^{2}(\Gamma)$,
$\eta_{2}\in H_{1}(\Gamma)'$, where $H_{1}(\Gamma)=D(\grad_{\Gamma})$
equipped with the graph norm. Consider the operators $\grad\colon X_{1}\to L^{2}\left(\Omega\right)^{3},$
$\gamma_{1}\colon X_{1}\to L^{2}(\Gamma),x_{1}\mapsto\gamma x_{1}$
and $\gamma_{2}\colon X_{1}\to H_{1}(\Gamma),x_{1}\mapsto\gamma x_{1}$.
Then $-\dive v=\grad^{\diamond}v+\gamma_{1}^{\diamond}\eta_{1}+\gamma_{2}'\eta_{2}$
if and only if%
\footnote{Note that a-priori $n\cdot v$ is only defined as a functional in
$H_{1/2}\left(\Gamma\right)^{\prime}$, where $H_{1/2}(\Gamma)=\gamma[H_{1}(\Omega)]$.
Indeed, $n\cdot v$ is the element on $H_{1/2}(\Gamma)'$ such that
for all $\psi\in H_{1}(\Omega)$ the equation
\[
\left(n\cdot v\right)(\gamma\psi)=\langle\dive v|\psi\rangle_{L^{2}(\Omega)}+\langle v|\grad\psi\rangle_{L^{2}(\Omega)^{3}}
\]
is satisfied.%
} $n\cdot v+\eta_{1}+\eta_{2}=0$ on $L^{2}\left(\Gamma\right)$.\end{lem}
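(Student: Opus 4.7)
The plan is to unfold the equation $-\dive v = \grad^{\diamond} v + \gamma_{1}^{\diamond}\eta_{1}+\gamma_{2}^{\prime}\eta_{2}$, which is an identity in $X_{1}^{\prime}$, by testing it against an arbitrary $\psi\in X_{1}=D(\grad_{\Gamma}\gamma)$. For such $\psi$ we have $\psi\in H_{1}(\Omega)$ and $\gamma\psi\in H_{1}(\Gamma)$, so every term is well-defined. Reading off the definitions of $\grad^{\diamond}$, $\gamma_{1}^{\diamond}$ and $\gamma_{2}^{\prime}$, the equation becomes
\[
-\langle \dive v | \psi\rangle_{L^{2}(\Omega)} = \langle v | \grad\psi\rangle_{L^{2}(\Omega)^{3}} + \langle \eta_{1} | \gamma\psi\rangle_{L^{2}(\Gamma)} + \eta_{2}(\gamma\psi)\quad(\psi\in X_{1}).
\]
By the defining identity for $n\cdot v\in H_{1/2}(\Gamma)^{\prime}$ supplied in the footnote, the first two terms combine to $-(n\cdot v)(\gamma\psi)$, and the equation reduces to
\[
(n\cdot v + \eta_{1} + \eta_{2})(\gamma\psi)=0\quad(\psi\in X_{1}).
\]
Since every step in this reformulation is reversible, the direction ``$\Leftarrow$'' of the equivalence is immediate: if the bracketed functional vanishes, testing against $\gamma\psi$ gives the equation in $X_{1}^{\prime}$ after retracing the manipulations.

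For the converse I need to pass from vanishing on $\gamma[X_{1}]$ to vanishing of $n\cdot v+\eta_{1}+\eta_{2}$ itself. The key ingredient is density of $\gamma[X_{1}]$ in $H_{1}(\Gamma)$. Since $\Omega$ has Lipschitz boundary and $\Gamma$ is sufficiently smooth for $\grad_{\Gamma}$ to be defined, every $\psi\in C_{\infty}(\overline{\Omega})$ lies in $H_{1}(\Omega)$ with $\gamma\psi\in C_{\infty}(\Gamma)\subseteq H_{1}(\Gamma)$; hence $C_{\infty}(\overline{\Omega})\subseteq X_{1}$ and $C_{\infty}(\Gamma)\subseteq\gamma[X_{1}]$. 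Density of $C_{\infty}(\Gamma)$ in $H_{1}(\Gamma)$ then yields $n\cdot v+\eta_{1}+\eta_{2}=0$ in $H_{1}(\Gamma)^{\prime}$. Interpreting this via the Gelfand triple $H_{1}(\Gamma)\hookrightarrow L^{2}(\Gamma)\hookrightarrow H_{1}(\Gamma)^{\prime}$ gives the statement ``$n\cdot v+\eta_{1}+\eta_{2}=0$ on $L^{2}(\Gamma)$''.

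The main conceptual point, rather than a technical obstacle, is to fix a common ambient space for the three summands $n\cdot v\in H_{1/2}(\Gamma)^{\prime}$, $\eta_{1}\in L^{2}(\Gamma)$ and $\eta_{2}\in H_{1}(\Gamma)^{\prime}$; the chain $H_{1}(\Gamma)\hookrightarrow H_{1/2}(\Gamma)\hookrightarrow L^{2}(\Gamma)\hookrightarrow H_{1}(\Gamma)^{\prime}$ makes $H_{1}(\Gamma)^{\prime}$ the natural choice. Once this interpretation is in place, the lemma reduces to the footnote's Green identity together with the density argument above, with no further analytic input required beyond $C_{\infty}(\overline{\Omega})\subseteq H_{1}(\Omega)$.
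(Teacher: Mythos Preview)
Your approach is the same as the paper's: test the identity in $X_{1}'$ against $\psi\in X_{1}$, rewrite via the Green identity in the footnote, and finish with a density argument for $\gamma[X_{1}]$. The manipulations and the $(\Leftarrow)$ direction are fine.

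There is, however, a small logical slip in your density step. From ``$\gamma\psi\in C_{\infty}(\Gamma)$ for every $\psi\in C_{\infty}(\overline{\Omega})$'' you conclude $C_{\infty}(\Gamma)\subseteq\gamma[X_{1}]$; what actually follows is the reverse inclusion $\gamma[C_{\infty}(\overline{\Omega})]\subseteq C_{\infty}(\Gamma)$. To get $C_{\infty}(\Gamma)\subseteq\gamma[X_{1}]$ you would need an extension result (every smooth function on $\Gamma$ extends to an element of $X_{1}$), which you have not supplied. The paper sidesteps this by arguing density of $\gamma[X_{1}]$ in $L^{2}(\Gamma)$ instead: $C_{\infty}(\overline{\Omega})$ is dense in $H_{1}(\Omega)$, $\gamma\colon H_{1}(\Omega)\to H_{1/2}(\Gamma)$ is continuous and onto, so $\gamma[C_{\infty}(\overline{\Omega})]$ is dense in $H_{1/2}(\Gamma)$ and hence in $L^{2}(\Gamma)$. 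Either route is easy to complete, but as written your inclusion is unjustified.
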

\begin{proof}
$(\Rightarrow)$ Assume that $-\dive v=\grad^{\diamond}v+\gamma_{1}^{\diamond}\eta_{1}+\gamma_{2}'\eta_{2}$.
Then, for $\psi\in X_{1}$ we have that $\gamma\psi=\gamma_{1}\psi=\gamma_{2}\psi\in H_{1}(\Gamma)\subseteq H_{1/2}(\Gamma)\subseteq L^{2}(\Gamma)$
and we compute
\begin{align*}
\left(n\cdot v+\eta_{1}+\eta_{2}\right)(\gamma\psi) & =\langle\dive v|\psi\rangle_{L^{2}\left(\Omega\right)}+\langle v|\grad\psi\rangle_{L^{2}\left(\Omega\right)^{3}}+\langle\eta_{1}|\gamma_{1}\psi\rangle_{L^{2}\left(\Gamma\right)}+\eta_{2}(\gamma\psi)\\
 & =\langle\dive v|\psi\rangle_{L^{2}\left(\Omega\right)}+\left(\grad^{\diamond}v+\gamma_{1}^{\diamond}\eta_{1}+\gamma_{2}'\eta_{2}\right)(\psi)\\
 & =\langle\dive v|\psi\rangle_{L^{2}(\Omega)}-\langle\dive v|\psi\rangle_{L^{2}(\Omega)}\\
 & =0.
\end{align*}
Thus, the assertion follows, if we show that $\gamma[X_{1}]$ is dense
in $L^{2}(\Gamma)$. For this, we note that $C_{\infty}(\overline{\Omega})$
is dense in $H_{1}(\Omega)$ since $\Omega$ is bounded and has Lipschitz
boundary. Moreover, $\gamma:H_{1}(\Omega)\to H_{1/2}(\Gamma)$ is
continuous and onto and hence, $\gamma[C_{\infty}(\overline{\Omega})]$
is dense in $H_{1/2}(\Omega)$ and thus, dense in $L^{2}(\Gamma).$
Since $C_{\infty}(\overline{\Omega})\subseteq D(\grad_{\Gamma}\gamma)=X_{1}$
we derive the assertion. 

$(\Leftarrow)$ Assume now that $n\cdot v+\eta_{1}+\eta_{2}=0$ on
$L^{2}\left(\Gamma\right)$. Then we compute for $\psi\in X_{1}$
\begin{align*}
0 & =\langle n\cdot v+\eta_{1}+\eta_{2}|\gamma\psi\rangle_{L^{2}(\Gamma)}\\
 & =\langle\dive v|\psi\rangle_{L^{2}\left(\Omega\right)}+\langle v|\grad\psi\rangle_{L^{2}\left(\Omega\right)^{3}}+\langle\eta_{1}|\gamma_{1}\psi\rangle_{L^{2}\left(\Gamma\right)}+\eta_{2}(\gamma_{2}\psi)\\
 & =\langle\dive v|\psi\rangle_{L^{2}\left(\Omega\right)}+\left(\grad^{\diamond}v+\gamma_{1}^{\diamond}\eta_{1}+\gamma_{2}'\eta_{2}\right)(\psi)
\end{align*}
which gives the assertion. 
\end{proof}
With this lemma we can characterize the domain of $C^{\ast}$ with
$C\coloneqq\left(\begin{array}{c}
\grad\\
\gamma\\
\grad_{\Gamma}\gamma
\end{array}\right)$ in terms of an abstract boundary condition.
\begin{thm}
We have 
\[
\left(\begin{array}{c}
\grad\\
\gamma\\
\grad_{\Gamma}\gamma
\end{array}\right)^{*}\left(\begin{array}{c}
v\\
\eta_{1}\\
\eta_{2}
\end{array}\right)=-\dive v
\]
with 
\[
D\left(\left(\begin{array}{c}
\grad\\
\gamma\\
\grad_{\Gamma}\gamma
\end{array}\right)^{*}\right)=\left\{ (v,\eta_{1},\eta_{2})\in D\left(\dive\right)\times L^{2}(\Gamma)\times L_{\tau}^{2}(\Gamma)\,|\, n\cdot v+\eta_{1}+\grad_{\Gamma}^{\diamond}\eta_{2}=0\text{ on }L^{2}\left(\Gamma\right)\right\} .
\]
\end{thm}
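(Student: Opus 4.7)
The plan is to combine the general adjoint formula from Theorem \ref{thm:AGDS-adjoint} with the two preparatory Lemmas \ref{lem:What is gradgg} and \ref{lem:boundary equation}. Proposition \ref{prop:Acoustic_grad_div} already tells us that $C$ generates an abstract $\grad$--$\dive$ system with $C_{1}=\grad$, $C_{2}=\gamma$ (viewed as $\gamma_{1}\colon X_{1}\to L^{2}(\Gamma)$) and $C_{3}=\grad_{\Gamma}\gamma$, so Theorem \ref{thm:AGDS-adjoint} identifies $C^{\ast}$ as those triples $(v,\eta_{1},\eta_{2})\in L^{2}(\Omega)^{3}\oplus L^{2}(\Gamma)\oplus L_{\tau}^{2}(\Gamma)$ for which
\[
\grad^{\diamond}v+\gamma_{1}^{\diamond}\eta_{1}+(\grad_{\Gamma}\gamma)^{\diamond}\eta_{2}\in L^{2}(\Omega),
\]
the value of $C^{\ast}(v,\eta_{1},\eta_{2})$ being precisely this $L^{2}$-element. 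Using Lemma \ref{lem:What is gradgg}, I would rewrite the last term as $\gamma_{2}^{\prime}\grad_{\Gamma}^{\diamond}\eta_{2}$, where $\gamma_{2}\colon X_{1}\to H_{1}(\Gamma)$ and $\grad_{\Gamma}^{\diamond}\eta_{2}\in H_{1}(\Gamma)^{\prime}$, which puts the expression exactly in the form considered in Lemma \ref{lem:boundary equation}.

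For the inclusion ``$\subseteq$'', I would first test the condition against test functions $\phi\in\interior C_{\infty}(\Omega)\subseteq X_{1}$ (these lie in $X_{1}$ because $\gamma\phi=0$ annihilates $\grad_{\Gamma}\gamma$). Both $\gamma_{1}^{\diamond}\eta_{1}$ and $\gamma_{2}^{\prime}\grad_{\Gamma}^{\diamond}\eta_{2}$ vanish on such $\phi$, so the condition forces $\grad^{\diamond}v$ to agree with an $L^{2}(\Omega)$-function on $\interior C_{\infty}(\Omega)$. Since the distributional divergence of $v$ is the unique representative, this yields $v\in D(\dive)$ with $C^{\ast}(v,\eta_{1},\eta_{2})=-\dive v$. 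With $v\in D(\dive)$ and $\tilde\eta_{2}\coloneqq\grad_{\Gamma}^{\diamond}\eta_{2}\in H_{1}(\Gamma)^{\prime}$, Lemma \ref{lem:boundary equation} then translates the identity $-\dive v=\grad^{\diamond}v+\gamma_{1}^{\diamond}\eta_{1}+\gamma_{2}^{\prime}\tilde\eta_{2}$ into the desired boundary condition $n\cdot v+\eta_{1}+\grad_{\Gamma}^{\diamond}\eta_{2}=0$ on $L^{2}(\Gamma)$.

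The reverse inclusion ``$\supseteq$'' is just the other direction of Lemma \ref{lem:boundary equation}: given $v\in D(\dive)$ together with $n\cdot v+\eta_{1}+\grad_{\Gamma}^{\diamond}\eta_{2}=0$, that lemma (again applied with $\tilde\eta_{2}=\grad_{\Gamma}^{\diamond}\eta_{2}$) gives $-\dive v=\grad^{\diamond}v+\gamma_{1}^{\diamond}\eta_{1}+\gamma_{2}^{\prime}\grad_{\Gamma}^{\diamond}\eta_{2}=\grad^{\diamond}v+\gamma^{\diamond}\eta_{1}+(\grad_{\Gamma}\gamma)^{\diamond}\eta_{2}$, which is manifestly in $L^{2}(\Omega)$. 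Theorem \ref{thm:AGDS-adjoint} therefore places $(v,\eta_{1},\eta_{2})$ in $D(C^{\ast})$ with $C^{\ast}(v,\eta_{1},\eta_{2})=-\dive v$.

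The only genuine obstacle is bookkeeping: one has to keep track of three different Gelfand-triple structures (for $X_{1}$, for $H_{1}(\Gamma)$, and for the surface gradient) and verify that $\grad_{\Gamma}^{\diamond}\eta_{2}$ sits in the right dual $H_{1}(\Gamma)^{\prime}$ so that Lemma \ref{lem:boundary equation} applies verbatim. Once that identification is made explicit via Lemma \ref{lem:What is gradgg}, the argument reduces to a direct composition of the three cited results with no further analytic input required.
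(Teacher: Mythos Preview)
Your proposal is correct and follows essentially the same route as the paper: both arguments combine Proposition \ref{prop:Acoustic_grad_div}, Theorem \ref{thm:AGDS-adjoint}, Lemma \ref{lem:What is gradgg}, and Lemma \ref{lem:boundary equation} in the same order. The only cosmetic difference is that where the paper invokes Corollary \ref{cor:adjoint is res} (applied to $\interior\grad$) to obtain $v\in D(\dive)$ and $C^{\ast}(v,\eta_{1},\eta_{2})=-\dive v$, you instead spell out the underlying test-function argument directly by restricting to $\phi\in\interior C_{\infty}(\Omega)\subseteq X_{1}$; this is precisely the content of that corollary in the case at hand.
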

\begin{proof}
In Proposition \ref{prop:Acoustic_grad_div}, we had already seen
that $C=\left(\begin{array}{c}
\grad\\
\gamma\\
\grad_{\Gamma}\gamma
\end{array}\right)$ generates an abstract $\grad$-$\dive$-system and thus, by Theorem
\ref{thm:AGDS-adjoint} we get $(v,\eta_{1},\eta_{2})\in D(C^{*})$
if and only if $\grad^{\diamond}v+\gamma^{\diamond}\eta_{1}+(\grad_{\Gamma}\gamma)^{\diamond}\eta_{2}\in L^{2}(\Omega)$.
Moreover, we clearly have
\[
\left(\begin{array}{c}
\interior\grad\\
0\\
0
\end{array}\right)\subseteq\left(\begin{array}{c}
\grad\\
\gamma\\
\grad_{\Gamma}\gamma
\end{array}\right)
\]
and hence, Corollary \ref{cor:adjoint is res} implies that 
\[
C^{*}\subseteq\left(\begin{array}{ccc}
-\dive & 0 & 0\end{array}\right),
\]
since $-\dive=\interior\grad^{\ast}$. In consequence (also use Lemma
\ref{lem:What is gradgg}), $(v,\eta_{1},\eta_{2})\in D(C^{*})$ if
and only if $v\in D(\dive)$ and $-\dive v=\grad^{\diamond}v+\gamma^{\diamond}\eta_{1}+\gamma'\grad_{\Gamma}^{\diamond}\eta_{2}\in L^{2}(\Omega)$,
which in turn is equivalent to $v\in D(\dive)$ and $n\cdot v+\eta_{1}+\grad_{\Gamma}^{\diamond}\eta_{2}=0$
on $L^{2}(\Gamma)$ by Lemma \ref{lem:boundary equation}. 
\end{proof}
In the rest of this section, we shall formally compute the equation
modeled by \eqref{eq:acoustic-bdy}. That is to say, we assume that
the system
\[
\left(\partial_{0}M_{0}+M_{1}+A\right)\left(\begin{array}{c}
p\\
\left(\begin{array}{c}
v\\
\eta_{1}\\
\eta_{2}
\end{array}\right)
\end{array}\right)=\left(\begin{array}{c}
f\\
\left(\begin{array}{c}
g\\
h_{1}\\
h_{2}
\end{array}\right)
\end{array}\right)
\]
given in \eqref{eq:acoustic-bdy} with the special block-diagonal
situation
\begin{align*}
M_{0}+\partial_{0}^{-1}M_{1} & =\left(\begin{array}{cc}
m_{\text{0}0} & \left(\begin{array}{ccc}
0 & 0 & 0\end{array}\right)\\
\left(\begin{array}{c}
0\\
0\\
0
\end{array}\right) & \left(\begin{array}{ccc}
\mu_{11} & 0 & 0\\
0 & \mu_{22} & 0\\
0 & 0 & \mu_{33}
\end{array}\right)
\end{array}\right)+\partial_{0}^{-1}\left(\begin{array}{cc}
n_{\text{0}0} & \left(\begin{array}{ccc}
0 & 0 & 0\end{array}\right)\\
\left(\begin{array}{c}
0\\
0\\
0
\end{array}\right) & \left(\begin{array}{ccc}
\nu_{11} & 0 & 0\\
0 & \nu_{22} & 0\\
0 & 0 & \nu_{33}
\end{array}\right)
\end{array}\right),
\end{align*}
for non-negative scalars $m_{00},n_{00},\mu_{11},\mu_{22},\mu_{33},\nu_{11},\nu_{22},\nu_{33}$
being arranged in the way that $M_{0}$ and $M_{1}$ satisfy the conditions
to warrant well-posedness, i.e., $M_{0}=M_{0}^{*}$ strictly positive
definite on its range and $\Re M_{1}$ strictly positive definite
on the nullspace of $M_{0}$, admits smooth solutions $\left(\begin{array}{c}
p\\
\left(\begin{array}{c}
v\\
\eta_{1}\\
\eta_{2}
\end{array}\right)
\end{array}\right)$. The resulting system leads to 
\begin{align}
\partial_{0}m_{00}p+n_{00}p+\dive v & =f\nonumber \\
\partial_{0}\mu_{11}v+\nu_{11}v+\grad p & =g\nonumber \\
\partial_{0}\mu_{22}\eta_{1}+\nu_{22}\eta_{1}+\gamma p & =h_{1}\nonumber \\
\partial_{0}\mu_{33}\eta_{2}+\nu_{33}\eta_{2}+\grad_{\Gamma}\gamma p & =h_{2}\nonumber \\
n\cdot v+\eta_{1} & =-\grad_{\Gamma}^{\diamond}\eta_{2},\label{eq:wave}
\end{align}
where the last equality is the boundary condition induced by the domain
of $A$. Eliminating $\eta_{1},\eta_{2}$ by using the third and fourth
line of \eqref{eq:wave}, we obtain by formal calculations for the
boundary condition
\begin{align*}
n\cdot v+\left(\partial_{0}\mu_{22}+\nu_{22}\right)^{-1}\left(h_{1}-\gamma p\right) & =-\grad_{\Gamma}^{\diamond}\left(\partial_{0}\mu_{33}+\nu_{33}\right)^{-1}\left(h_{2}-\grad_{\Gamma}\gamma p\right).
\end{align*}
Multiplication by $\left(\partial_{0}\mu_{33}+\nu_{33}\right)$ gives
\begin{multline*}
\left(\partial_{0}\mu_{33}+\nu_{33}\right)n\cdot v-\left(\partial_{0}\mu_{33}+\nu_{33}\right)\left(\partial_{0}\mu_{22}+\nu_{22}\right)^{-1}\gamma p-\grad_{\Gamma}^{\diamond}\,\grad_{\Gamma}\gamma p\\
=-\grad_{\Gamma}^{\diamond}h_{2}-\left(\partial_{0}\mu_{33}+\nu_{33}\right)\left(\partial_{0}\mu_{22}+\nu_{22}\right)^{-1}h_{1}.
\end{multline*}
With $\mu_{22}=0$ and $\nu_{22}=1$ this simplifies further to 
\[
\left(\partial_{0}\mu_{33}+\nu_{33}\right)n\cdot v-\left(\partial_{0}\mu_{33}+\nu_{33}\right)\gamma p-\grad_{\Gamma}^{\diamond}\grad_{\Gamma}\gamma p=-\grad_{\Gamma}^{\diamond}h_{2}-\left(\partial_{0}\mu_{33}+\nu_{33}\right)h_{1}.
\]
Eliminating $v$, by using the second equation of \eqref{eq:wave},
yields
\begin{multline*}
-\left(\partial_{0}\mu_{33}+\nu_{33}\right)n\cdot\left(\partial_{0}\mu_{11}+\nu_{11}\right)^{-1}\left(g-\grad p\right)+\left(\partial_{0}\mu_{33}+\nu_{33}\right)\gamma p+\grad_{\Gamma}^{\diamond}\,\grad_{\Gamma}\gamma p\\
=\grad_{\Gamma}^{\diamond}h_{2}+\left(\partial_{0}\mu_{33}+\nu_{33}\right)h_{1}
\end{multline*}
and hence,
\begin{multline*}
\left(\partial_{0}\mu_{33}+\nu_{33}\right)\left(\partial_{0}\mu_{11}+\nu_{11}\right)^{-1}n\cdot\grad p+\left(\partial_{0}\mu_{33}+\nu_{33}\right)\gamma p+\grad_{\Gamma}^{\diamond}\,\grad_{\Gamma}\gamma p\\
=\grad_{\Gamma}^{\diamond}h_{2}+\left(\partial_{0}\mu_{33}+\nu_{33}\right)h_{1}+\left(\partial_{0}\mu_{33}+\nu_{33}\right)\left(\partial_{0}\mu_{11}+\nu_{11}\right)^{-1}n\cdot g.
\end{multline*}
Simplifying further by letting $\mu_{33}=\mu_{11}\alpha$, $\nu_{33}=\nu_{11}\alpha$
with some non-zero $\alpha\in\mathbb{R}_{>0}$, we arrive at 
\begin{multline*}
\left(\partial_{0}\mu_{11}+\nu_{11}\right)\gamma p+n\cdot\grad p+\alpha^{-1}\grad_{\Gamma}^{\diamond}\grad_{\Gamma}\gamma p=\left(\partial_{0}\mu_{11}+\nu_{11}\right)h_{1}+n\cdot g+\alpha^{-1}\grad_{\Gamma}^{\diamond}h_{2},
\end{multline*}
which is a heat type equation on the boundary of $\Omega$ (see also
Remark \ref{rem:grad-dach, div}). Noting that the first and second
line of \eqref{eq:wave} gives 
\[
\partial_{0}m_{00}p+n_{00}p+\dive(\partial_{0}\mu_{00}+\nu_{00})^{-1}(g-\grad p)=f,
\]
which yields for $\nu_{00}=0$ an abstract wave equation of the form
\[
\partial_{0}^{2}m_{00}p+\partial_{0}n_{00}p-\dive\mu_{00}\grad p=\partial_{0}f-\dive\mu_{00}g.
\]
Thus, for suitable $M_{0}$ and $M_{1}$, \eqref{eq:acoustic-bdy}
models a wave equation on $\Omega$ with a heat equation on $\partial\Omega$
as boundary condition. 
\begin{rem}
In \cite{2013arXiv1312.5882D}, an equation was studied, where on
a $1$-codimensional submanifold in $\Omega$ another partial differential
equation on an interface is also considered. We shall outline here,
how to include such problems in the abstract setting just discussed.
So let $\Omega\subseteq\mathbb{R}^{d}$ open with Lipschitz boundary
and assume that there exists $\tilde{\Omega}\subseteq\Omega$ open
with the property that $\partial\Omega\subseteq\partial\tilde{\Omega}$
and such that both $\partial\tilde{\Omega}$ and $\partial\tilde{\Omega}\setminus\partial\Omega$
are locally the graph of a Lipschitz continuous function and are manifolds
allowing for the definition of a covariant derivative. As the operator
$C$, generating the abstract $\grad$--$\dive$ system, we take the
following operator
\[
C\coloneqq\left(\begin{array}{c}
\grad\\
\gamma\\
\grad_{\Gamma}\gamma
\end{array}\right)
\]
similar to the considerations above. The domain, however, changes
a little, namely $\gamma$ being now the evaluation at $\Gamma\coloneqq\partial\tilde{\Omega}$
and $\grad_{\Gamma}$ is given analogous to the above definition. 
\end{rem}

\subsection{A Leontovich type boundary condition.}

As for the acoustic equations just discussed, we formulate a similar
problem also for Maxwell's equations. We will show that Maxwell's
equations with impedance type boundary conditions can be formulated
within the framework of evolutionary equations employing the notion
of abstract $\grad$-$\dive$-systems. For this let $\Omega\subseteq\mathbb{R}^{3}$
open. In this section, we consider the equation 
\begin{equation}
\left(\partial_{0}M\left(\partial_{0}^{-1}\right)+A\right)\left(\begin{array}{c}
H\\
\left(\begin{array}{c}
E\\
\eta
\end{array}\right)
\end{array}\right)=\left(\begin{array}{c}
f\\
\left(\begin{array}{c}
g\\
h_{1}
\end{array}\right)
\end{array}\right)\in H_{\rho,0}\left(\mathbb{R};L^{2}(\Omega)^{3}\oplus L^{2}(\Omega)^{3}\oplus H_{\trace}\right).\label{eq:Max_reg}
\end{equation}
Here $H_{\trace}$ denotes a suitable Hilbert space, which will be
used to formulate the boundary condition. In the forthcoming subsections
we will discuss two possibilities for the choice of $H_{\trace}$.

\subsubsection{A classical trace version}

Throughout, assume that $\Omega$ is bounded and has Lipschitz-boundary
$\Gamma$ and denote by $n$ the unit outward normal vector-field
on $\Gamma$. In order to define $A$ in \eqref{eq:Max_reg} properly,
we need the following trace operators (see \cite{zbMATH01866780}).
\begin{defn}
Let $L_{\tau}^{2}(\Gamma)$ denote the space of tangential vector-fields
on $\Gamma,$ i.e. 
\[
L_{\tau}^{2}(\Gamma)\coloneqq\left\{ f\in L^{2}(\Gamma)^{3}\,|\, f\cdot n=0\right\} ,
\]
which is a closed subspace of $L^{2}(\Gamma)^{3}.$ We define the
mappings 
\[
\pi_{\tau}:H_{1}(\Omega)^{3}\to L_{\tau}^{2}(\Gamma),H\mapsto-n\times(n\times\gamma H)=\gamma H-(n\cdot\gamma H)n,
\]
and 
\[
\gamma_{\tau}:H_{1}(\Omega)^{3}\to L_{\tau}^{2}(\Gamma),H\mapsto\gamma H\times n,
\]
where $\gamma:H_{1}(\Omega)^{3}\to H_{1/2}(\Omega)^{3}$ denotes the
classical trace (cp. Subsection \ref{sub:wave}). We set $V_{\pi}\coloneqq\pi_{\tau}\left[H_{1}(\Omega)^{3}\right]$
and $V_{\gamma}\coloneqq\gamma_{\tau}\left[H_{1}(\Omega)^{3}\right]$,
which are Hilbert spaces equipped with the norms 
\begin{align*}
|v|_{V_{\pi}} & \coloneqq\inf_{H\in H_{1}(\Omega)^{3}}\{|\gamma H|_{H_{1/2}(\Gamma)^{3}}\,|\,\pi_{\tau}H=v\},\\
|v|_{V_{\gamma}} & \coloneqq\inf_{H\in H_{1}(\Omega)^{3}}\{|\gamma H|_{H_{1/2}(\Gamma)^{3}}\,|\,\gamma_{\tau}H=v\}.
\end{align*}
\end{defn}
\begin{rem}
$\,$

\begin{enumerate}[(a)]

\item We have $\id_{\pi}\colon V_{\pi}\hookrightarrow L_{\tau}^{2}(\Gamma)$
and $\id_{\gamma}\colon V_{\gamma}\hookrightarrow L_{\tau}^{2}(\Gamma)$
with continuous and dense embeddings. Consequently, $\id_{\pi}^{\diamond}\colon L_{\tau}^{2}(\Gamma)\hookrightarrow V_{\pi}'$
as well as $\id_{\gamma}^{\diamond}\colon L_{\tau}^{2}(\Gamma)\hookrightarrow V_{\gamma}'$
with continuous and dense embeddings.

\item Integration by parts gives 
\[
\langle\pi_{\tau}H|\gamma_{\tau}E\rangle_{L_{\tau}^{2}(\Gamma)}=\langle\curl H|E\rangle_{L^{2}(\Omega)^{3}}-\langle H|\curl E\rangle_{L^{2}(\Omega)^{3}},
\]
for all $H,E\in H_{1}(\Omega)^{3},$ which yields that (see \cite{zbMATH01866780}
or the concise presentation in \cite[Section 4]{WeissStaffans2013})
\[
R_{\left(L_{\tau}^{2}\right)'}\circ\pi_{\tau}:H^{1}(\Omega)^{3}\subseteq H(\curl,\Omega)\to V_{\gamma}',
\]
where $R_{\left(L_{\tau}^{2}\right)'}$ is defined as in Section \ref{sec:Construction-of-Abstract},
is continuous, where $H(\curl,\Omega)$ denotes the domain of $\curl$
equipped with its graph norm. Hence, $\pi_{\tau}$ has a unique continuous
extension to an operator 
\[
\pi_{\tau}:H(\curl,\Omega)\to V_{\gamma}'.
\]
The same argument works for $\gamma_{\tau}$, we, thus, get that 
\[
\gamma_{\tau}:H(\curl,\Omega)\to V_{\pi}'
\]
is continuous. 

\end{enumerate}\end{rem}
\begin{lem}
\label{lem:closability of tilde C}The operator
\[
C\colon\pi_{\tau}^{-1}\left[\id_{\tau}^{\diamond}\left[L_{\tau}^{2}\left(\Gamma\right)\right]\right]\subseteq L^{2}(\Omega)^{3}\to L^{2}(\Omega)^{3}\oplus L_{\tau}^{2}(\Gamma),H\mapsto\left(\begin{array}{c}
-\curl\\
\pi_{\tau}
\end{array}\right)H
\]
is closed.\end{lem}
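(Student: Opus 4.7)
My plan is to verify sequential closedness directly. Let $(H_k)_k \subseteq D(C)$ with $H_k\to H$ in $L^2(\Omega)^3$, $-\curl H_k\to F$ in $L^2(\Omega)^3$ and $\pi_{\tau} H_k\to g$ in $L_{\tau}^2(\Gamma)$. I need to establish $H\in D(C)$, $-\curl H = F$ and $\pi_{\tau} H = g$ (under the identification of $L_{\tau}^2(\Gamma)$ with a subspace of $V_\gamma'$ via the canonical embedding).

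First, the bulk part: the maximal $\curl$ operator on $L^2(\Omega)^3$ with domain $H(\curl,\Omega)$ is closed (it is the adjoint of $\interior\curl$). Hence $H\in H(\curl,\Omega)$ with $\curl H = -F$, and so $H_k\to H$ also in the graph norm of $H(\curl,\Omega)$.

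Next, the boundary part. By the extension statement in the preceding remark, $\pi_\tau\colon H(\curl,\Omega)\to V_\gamma'$ is continuous, so $\pi_\tau H_k\to \pi_\tau H$ in $V_\gamma'$. On the other hand, the dense continuous embedding $\id_\gamma^\diamond\colon L_\tau^2(\Gamma)\to V_\gamma'$ together with $\pi_\tau H_k\to g$ in $L_\tau^2(\Gamma)$ gives $\id_\gamma^\diamond(\pi_\tau H_k)\to \id_\gamma^\diamond g$ in $V_\gamma'$. Since $H_k\in D(C)$, the trace $\pi_\tau H_k$ is a genuine element of $L_\tau^2(\Gamma)$, whose image in $V_\gamma'$ is precisely $\id_\gamma^\diamond(\pi_\tau H_k)$. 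Equating the two limits in $V_\gamma'$ yields
\[
\pi_\tau H \;=\; \id_\gamma^\diamond g \;\in\; \id_\gamma^\diamond\!\left[L_\tau^2(\Gamma)\right],
\]
which already shows $H\in D(C)$. Finally, injectivity of $\id_\gamma^\diamond$ (coming from the density of $V_\gamma\hookrightarrow L_\tau^2(\Gamma)$) lets us read off $\pi_\tau H = g$ as an identity in $L_\tau^2(\Gamma)$.

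The only non-routine point is the careful bookkeeping of identifications: the value $\pi_\tau H_k$ lives simultaneously in $L_\tau^2(\Gamma)$ (because of the domain constraint) and in the larger space $V_\gamma'$ (because of the extended trace). The argument hinges on recognizing that the two convergences take place in different topologies but are linked by the continuous, dense embedding $\id_\gamma^\diamond\colon L_\tau^2(\Gamma)\hookrightarrow V_\gamma'$, whose injectivity transports the $V_\gamma'$-limit back to an $L_\tau^2(\Gamma)$-identity. Once this identification is set up, closedness is immediate from the closedness of the maximal $\curl$ together with the continuity of the extended trace.
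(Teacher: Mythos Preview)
Your proof is correct and follows essentially the same line as the paper's: closedness of $\curl$ gives the bulk part and graph-norm convergence, continuity of the extended trace $\pi_\tau\colon H(\curl,\Omega)\to V_\gamma'$ transports this to $V_\gamma'$, and the continuous (dense, hence injective) embedding $L_\tau^2(\Gamma)\hookrightarrow V_\gamma'$ identifies the two limits. The only difference is that you spell out the injectivity step more explicitly than the paper does.
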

\begin{proof}
First of all, note that $\pi_{\tau}^{-1}\left[\id_{\tau}^{\diamond}\left[L_{\tau}^{2}\left(\Gamma\right)\right]\right]\subseteq D(\curl)$.
Next, let $(\phi_{n})_{n}$ in $D(C)$ with $\phi_{n}\to\phi$ in
$L^{2}\left(\Omega\right)^{3}$ as $n\to\infty$ and $\left(\pi_{\tau}\phi_{n}\right)_{n}$
convergent to some $\psi\in L_{\tau}^{2}(\Gamma)$ as well as $\left(\curl\phi_{n}\right)_{n}$
convergent to some $\eta\in L^{2}\left(\Omega\right)^{3}$. By the
closedness of $\curl$, we infer that $\phi\in H(\curl,\Omega)$ and
$\curl\phi=\eta$. Hence, $\phi_{n}\to\phi$ in $H(\curl,\Omega)$
as $n\to\infty$. As $\pi_{\tau}$ is continuous, we infer that $\pi_{\tau}\phi_{n}\to\pi_{\tau}\phi$
in $V_{\gamma}'$ as $n\to\infty$. Since $L_{\tau}^{2}(\Gamma)\hookrightarrow V_{\gamma}'$
with continuous embedding, we get $\pi_{\tau}\phi=\psi\in L_{\tau}^{2}(\Gamma)$,
which implies the assertion. 
\end{proof}
Define for this section
\begin{align*}
X_{0} & \coloneqq L^{2}(\Omega)^{3},\\
X_{1} & \coloneqq\left(D\left(C\right),\sqrt{|\cdot|^{2}+|C\cdot|^{2}}\right).
\end{align*}

We may now introduce the operator $A$: 
\[
A=\left(\begin{array}{cc}
\left(\begin{array}{ccc}
0 & 0 & 0\\
0 & 0 & 0\\
0 & 0 & 0
\end{array}\right) & -\left(\begin{array}{c}
-\widetilde{\curl}\\
\tilde{\pi_{\tau}}
\end{array}\right)^{*}\\
\left(\begin{array}{c}
-\widetilde{\curl}\\
\tilde{\pi_{\tau}}
\end{array}\right) & \left(\begin{array}{cc}
\left(\begin{array}{ccc}
0 & 0 & 0\\
0 & 0 & 0\\
0 & 0 & 0
\end{array}\right) & \left(\begin{array}{ccc}
0 & 0 & 0\\
0 & 0 & 0\\
0 & 0 & 0
\end{array}\right)\\
\left(\begin{array}{ccc}
0 & 0 & 0\\
0 & 0 & 0\\
0 & 0 & 0
\end{array}\right) & \left(\begin{array}{ccc}
0 & 0 & 0\\
0 & 0 & 0\\
0 & 0 & 0
\end{array}\right)
\end{array}\right)
\end{array}\right),
\]
where $\widetilde{\curl}\coloneqq\curl\restricted{X_{1}}:X_{1}\to L^{2}\left(\Omega\right)^{3}$
and $\tilde{\pi}_{\tau}\coloneqq\pi_{\tau}|_{X_{1}}:X_{1}\to L_{\tau}^{2}\left(\Gamma\right)$.
$\left(\begin{array}{c}
-\widetilde{\curl}\\
\tilde{\pi}_{\tau}
\end{array}\right):X_{1}\subseteq L^{2}(\Omega)^{3}\to L^{2}(\Omega)^{3}\oplus L_{\tau}^{2}(\Gamma)$ generates an abstract $\grad$--$\dive$ system: the only thing left
to show is that $X_{1}$ is dense in $L^{2}(\Omega)^{3}$. This, however,
is trivial as $\interior C_{\infty}(\Omega)\subseteq X_{1}$. 

Similarly to the previous section, we shall have a look at the condition
for being contained in the domain of $\left(\begin{array}{c}
-\widetilde{\curl}\\
\tilde{\pi}_{\tau}
\end{array}\right)^{*}$. Again, we need a prerequisite:
\begin{lem}
\label{lem:bdy eq Maxwell1}Let $E\in D(\curl)$, $\eta\in L_{\tau}^{2}\left(\Gamma\right)$.
Then $\curl E=\tilde{\curl}^{\diamond}E-\tilde{\pi}_{\tau}^{\diamond}\eta$
if and only if $\gamma_{\tau}E+\eta=0$ on $L_{\tau}^{2}\left(\Gamma\right)$
.\end{lem}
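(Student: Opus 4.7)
The plan is to mimic the proof of Lemma \ref{lem:boundary equation}, combining the definition of the \(\diamond\)-operation with the integration by parts formula for \(\curl\) and a density argument for the tangential trace. First, I would unfold the operator identity
\[
\curl E = \tilde\curl^{\diamond}E - \tilde\pi_{\tau}^{\diamond}\eta
\]
as an equation in \(X_{1}^{\prime}\), viewing \(\curl E\in L^{2}(\Omega)^{3}\) as an element of \(X_{1}^{\prime}\) through the Gelfand triple \(X_{1}\hookrightarrow L^{2}(\Omega)^{3}\hookrightarrow X_{1}^{\prime}\). Using \((S^{\diamond}y)(x)=\langle y|Sx\rangle\), this translates into the scalar identity
\[
\langle\curl E|\psi\rangle_{L^{2}(\Omega)^{3}} = \langle E|\curl\psi\rangle_{L^{2}(\Omega)^{3}} - \langle\eta|\pi_{\tau}\psi\rangle_{L_{\tau}^{2}(\Gamma)}
\]
for every \(\psi\in X_{1}\).

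Next, I would bring in the Green formula \(\langle\pi_{\tau}\psi|\gamma_{\tau}E\rangle=\langle\curl\psi|E\rangle-\langle\psi|\curl E\rangle\), initially valid for \(\psi,E\in H_{1}(\Omega)^{3}\), and extended to \(E\in D(\curl)\) via the continuous tangential trace maps \(\pi_{\tau}\colon H(\curl,\Omega)\to V_{\gamma}^{\prime}\) and \(\gamma_{\tau}\colon H(\curl,\Omega)\to V_{\pi}^{\prime}\) from the preceding remark. Taking complex conjugates and specialising to test elements \(\psi\in X_{1}\cap H_{1}(\Omega)^{3}\), which contains \(C_{\infty}(\overline{\Omega})^{3}\) since for such functions \(\pi_{\tau}\psi\) already lies in \(L_{\tau}^{2}(\Gamma)\), the boundary pairing reduces to an honest \(L_{\tau}^{2}(\Gamma)\)-inner product. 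Combining this with the scalar identity above yields
\[
\langle\gamma_{\tau}E + \eta\,|\,\pi_{\tau}\psi\rangle_{L_{\tau}^{2}(\Gamma)} = 0
\]
for every such \(\psi\).

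For the forward direction \((\Rightarrow)\), it then suffices to show that \(\pi_{\tau}[X_{1}\cap H_{1}(\Omega)^{3}]\) is dense in \(L_{\tau}^{2}(\Gamma)\). Since \(\Omega\) is Lipschitz, \(C_{\infty}(\overline{\Omega})^{3}\) is dense in \(H_{1}(\Omega)^{3}\) and contained in \(X_{1}\); together with surjectivity of \(\gamma\) onto \(H_{1/2}(\Gamma)^{3}\), the density of \(H_{1/2}(\Gamma)^{3}\) in \(L^{2}(\Gamma)^{3}\), and boundedness of the pointwise tangential projection, this yields the desired density. The conclusion \(\gamma_{\tau}E + \eta = 0\) on \(L_{\tau}^{2}(\Gamma)\) follows — and, as a bonus, one reads off a posteriori that \(\gamma_{\tau}E\), a priori only an element of \(V_{\pi}^{\prime}\), is represented by an \(L_{\tau}^{2}(\Gamma)\)-function.

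The reverse direction is a direct substitution: assuming \(\gamma_{\tau}E\in L_{\tau}^{2}(\Gamma)\) with \(\gamma_{\tau}E + \eta = 0\), the integration by parts formula immediately produces the desired scalar identity for \(\psi\in X_{1}\cap H_{1}(\Omega)^{3}\), which extends to all \(\psi\in X_{1}\) by continuity of both sides in the \(X_{1}\)-norm. The main obstacle I expect is conceptual rather than computational, namely the justification of Green's formula for \(E\in D(\curl)\) (instead of the more regular \(H_{1}\)-setting where it is initially stated), together with the verification that, when tested against \(\psi\in X_{1}\), the boundary pairing may be interpreted inside \(L_{\tau}^{2}(\Gamma)\); the definition of \(X_{1}\) as \(\pi_{\tau}^{-1}[L_{\tau}^{2}(\Gamma)]\) is precisely tailored to make this identification unambiguous.
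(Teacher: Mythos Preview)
Your proposal is correct and follows essentially the same route as the paper: both unfold the $\diamond$-identity via the defining pairing, combine it with the Green formula for $\curl$ tested against $H_{1}(\Omega)^{3}$-fields, and then invoke density of $\pi_{\tau}[H_{1}(\Omega)^{3}]=V_{\pi}$ in $L_{\tau}^{2}(\Gamma)$ for the forward implication. The only minor difference is that the paper cites this density directly from \cite{zbMATH01866780}, whereas you assemble it from more elementary ingredients; for the reverse implication you correctly identify that one must pass from $\psi\in H_{1}(\Omega)^{3}$ to all $\psi\in X_{1}$, a point the paper treats somewhat tersely.
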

\begin{proof}
We observe that for $\Psi\in H^{1}(\Omega)^{3}\subseteq X_{1}$ the
equation
\begin{align}
\left(\gamma_{\tau}E+\eta\right)(\pi_{\tau}\Psi) & =\left\langle \curl E|\Psi\right\rangle _{L^{2}\left(\Omega\right)^{3}}-\left\langle E|\tilde{\curl}\Psi\right\rangle _{L^{2}\left(\Omega\right)^{3}}+\langle\eta|\tilde{\pi}_{\tau}\Psi\rangle_{L_{\tau}^{2}(\Gamma)}\label{eq:bdy_cdt_for_curl}\\
 & =\left\langle \curl E|\Psi\right\rangle _{L^{2}\left(\Omega\right)^{3}}-\left(\tilde{\curl}^{\diamond}E-\tilde{\pi}_{\tau}^{\diamond}\eta\right)(\Psi)\nonumber 
\end{align}
holds true. Thus, if $\curl E=\widetilde{\curl}^{\diamond}E-\tilde{\pi}_{\tau}^{\diamond}\eta,$
we get that $\left(\gamma_{\tau}E+\eta\right)(\pi_{\tau}\Psi)=0$
for each $\Psi\in H^{1}(\Omega)^{3}.$ Thus, $\gamma_{\tau}E+\eta=0$
on $L_{\tau}^{2}(\Gamma),$ due to the density of $\tilde{\pi}_{\tau}[H^{1}(\Omega)^{3}]=V_{\pi}$
in $L_{\tau}^{2}(\Gamma),$ see \cite[p 850]{zbMATH01866780}. On
the other hand, if $\gamma_{\tau}E+\eta=0,$ equation \eqref{eq:bdy_cdt_for_curl}
immediately gives $\curl E=\tilde{\curl}^{\diamond}E-\tilde{\pi}_{\tau}^{\diamond}\eta$
by the density of $H^{1}(\Omega)^{3}$ in $L^{2}(\Omega)^{3}.$ \end{proof}
\begin{thm}
\label{thm:bdy eq adjoint}We have $\left(\begin{array}{c}
-\widetilde{\curl}\\
\tilde{\pi_{\tau}}
\end{array}\right)^{*}\subseteq\left(\begin{array}{cc}
-\curl & 0\end{array}\right)$ and 
\[
D\left(\left(\begin{array}{c}
-\widetilde{\curl}\\
\tilde{\pi_{\tau}}
\end{array}\right)^{*}\right)=\left\{ (E,\eta)\in D(\curl)\times L_{\tau}^{2}\left(\Gamma\right)\,|\,\gamma_{\tau}E+\eta=0\text{ on }L_{\tau}^{2}(\Gamma)\right\} .
\]
\end{thm}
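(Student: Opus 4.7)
The plan is to mimic the proof strategy used for the analogous acoustic-boundary theorem: first extract the general form of $C^{\ast}$ from Theorem \ref{thm:AGDS-adjoint}, then use the fact that $\interior{\curl}$ embeds into the first block to pin down the action via Corollary \ref{cor:adjoint is res}, and finally reinterpret the resulting abstract identity as a classical tangential-trace boundary condition via Lemma \ref{lem:bdy eq Maxwell1}.

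More precisely, I would set $C\coloneqq\bigl(\begin{smallmatrix}-\widetilde{\curl}\\\tilde{\pi}_{\tau}\end{smallmatrix}\bigr)$ and argue in three steps. First, since $C$ generates an abstract $\grad$--$\dive$ system (as already observed just before the lemma), Theorem \ref{thm:AGDS-adjoint} yields
\[
C^{\ast}=\bigl((-\widetilde{\curl})^{\diamond}\;\;\tilde{\pi}_{\tau}^{\diamond}\bigr)\cap\bigl(L^{2}(\Omega)^{3}\oplus L_{\tau}^{2}(\Gamma)\oplus L^{2}(\Omega)^{3}\bigr),
\]
so $(E,\eta)\in D(C^{\ast})$ is equivalent to $(-\widetilde{\curl})^{\diamond}E+\tilde{\pi}_{\tau}^{\diamond}\eta\in L^{2}(\Omega)^{3}$. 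Second, since $\interior{C}_{\infty}(\Omega)^{3}\subseteq X_{1}$ and $\pi_{\tau}$ vanishes on compactly supported fields, the inclusion
\[
\left(\begin{array}{c}-\interior{\curl}\\ 0\end{array}\right)\subseteq\left(\begin{array}{c}-\widetilde{\curl}\\\tilde{\pi}_{\tau}\end{array}\right)
\]
holds. Corollary \ref{cor:adjoint is res}, together with $(-\interior{\curl})^{\ast}=-\curl$ (recorded in the preliminaries on differential operators), then gives $C^{\ast}\subseteq(-\curl\;\;0)$. In particular any $(E,\eta)\in D(C^{\ast})$ must satisfy $E\in D(\curl)$ and $-\curl E=(-\widetilde{\curl})^{\diamond}E+\tilde{\pi}_{\tau}^{\diamond}\eta$, i.e.\ $\curl E=\widetilde{\curl}^{\diamond}E-\tilde{\pi}_{\tau}^{\diamond}\eta$.

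Third, I apply Lemma \ref{lem:bdy eq Maxwell1}: for $E\in D(\curl)$ and $\eta\in L_{\tau}^{2}(\Gamma)$, the identity $\curl E=\widetilde{\curl}^{\diamond}E-\tilde{\pi}_{\tau}^{\diamond}\eta$ is equivalent to $\gamma_{\tau}E+\eta=0$ on $L_{\tau}^{2}(\Gamma)$. Combining this equivalence with the characterization obtained in the first two steps delivers both inclusions for $D(C^{\ast})$ and confirms the action $C^{\ast}(E,\eta)=-\curl E$, completing the proof.

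The only really substantive work is already packed into Lemma \ref{lem:bdy eq Maxwell1}; here there is no serious obstacle, only the bookkeeping of making sure the inclusion $\bigl(\begin{smallmatrix}-\interior{\curl}\\0\end{smallmatrix}\bigr)\subseteq C$ is legitimate (which requires the trivial observation $\pi_{\tau}\phi=0$ for $\phi\in\interior{C}_{\infty}(\Omega)^{3}$) and that the minus sign in $-\widetilde{\curl}$ is tracked consistently when passing from $(-\widetilde{\curl})^{\diamond}$ to $\widetilde{\curl}^{\diamond}$ in order to match the form used in Lemma \ref{lem:bdy eq Maxwell1}.
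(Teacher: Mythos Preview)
Your proposal is correct and follows essentially the same route as the paper's proof: invoke the inclusion $\bigl(\begin{smallmatrix}-\interior{\curl}\\0\end{smallmatrix}\bigr)\subseteq C$ together with Corollary~\ref{cor:adjoint is res} to obtain $C^{*}\subseteq(-\curl\;\;0)$, use Theorem~\ref{thm:AGDS-adjoint} to rewrite the domain condition as $-\curl E=-\widetilde{\curl}^{\diamond}E+\tilde{\pi}_{\tau}^{\diamond}\eta$, and then apply Lemma~\ref{lem:bdy eq Maxwell1} for the equivalence with the tangential boundary condition. The only cosmetic difference is the order in which you invoke Theorem~\ref{thm:AGDS-adjoint} and Corollary~\ref{cor:adjoint is res}; note also that the inclusion you state must hold on all of $D(\interior{\curl})$, not just on $\interior{C}_{\infty}(\Omega)^{3}$, but this follows immediately by taking closures since $C$ is closed.
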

\begin{proof}
Note that with $\interior\curl=\curl^{*}$ we have 
\[
\left(\begin{array}{c}
-\interior\curl\\
0
\end{array}\right)\subseteq\left(\begin{array}{c}
-\widetilde{\curl}\\
\tilde{\pi_{\tau}}
\end{array}\right)\eqqcolon C.
\]
Hence, by Corollary \ref{cor:adjoint is res}, we get 
\[
C^{*}\subseteq\left(\begin{array}{cc}
-\curl & 0\end{array}\right).
\]
Therefore, by Theorem \ref{thm:AGDS-adjoint}, we obtain $(E,\eta)\in D(C^{*})$
if and only if $E\in D(\curl)$ and 
\[
-\curl E=-\widetilde{\curl}^{\diamond}E+\tilde{\pi_{\tau}}^{\diamond}\eta\in L^{2}(\Omega)^{3},
\]
which, in turn, by Lemma \ref{lem:bdy eq Maxwell1} is equivalent
to $\gamma_{\tau}E+\eta=0$ on $L_{\tau}^{2}\left(\Gamma\right)$
and $E\in D(\curl)$.
\end{proof}
The latter theorem tells us that the containment in the domain of
$A$ is a boundary equation. So, in order to have a general class
of Maxwell-type equations at hand, which include differential equations
on the boundary, any suitable material law operator $M(\partial_{0}^{-1})$
allowing for well-posedness of \eqref{eq:Max_reg} is admitted. 

We shall elaborate the following particular choice for the material
law operator: 
\begin{align*}
M\left(\partial_{0}^{-1}\right)= & \left(\begin{array}{cc}
\mu & \left(\begin{array}{cc}
0 & 0\end{array}\right)\\
\left(\begin{array}{c}
0\\
0
\end{array}\right) & \left(\begin{array}{cc}
\epsilon & 0\\
0 & \kappa\left(\partial_{0}^{-1}\right)
\end{array}\right)
\end{array}\right)\in L\left(H_{\rho,0}\left(\mathbb{R};L^{2}(\Omega)^{3}\oplus L^{2}\left(\Omega\right)^{3}\oplus L_{\tau}^{2}\left(\Gamma\right)\right)\right),
\end{align*}
where $\kappa:B_{\mathbb{C}}\left(\frac{1}{2\rho_{0}},\frac{1}{2\rho_{0}}\right)\to L(L_{\tau}^{2}(\Gamma))$
is analytic and satisfies condition \eqref{eq:pos-def}. For this
choice (also use the result from Theorem \ref{thm:bdy eq adjoint})
\eqref{eq:Max_reg} formally reads as
\begin{align*}
\partial_{0}\mu H+\curl E & =f,\\
\partial_{0}\epsilon E-\widetilde{\curl}H & =g,\\
\partial_{0}\kappa\left(\partial_{0}^{-1}\right)\eta+\tilde{\pi_{\tau}}H & =h_{1},\\
\gamma_{\tau}E+\eta & =0.
\end{align*}
Elimination of $\eta$ yields
\begin{equation}
-\partial_{0}\kappa\left(\partial_{0}^{-1}\right)\gamma_{\tau}E+\tilde{\pi_{\tau}}H=h_{1}.\label{eq:bdy eq imp}
\end{equation}
This is an impedance type boundary condition. To see this we recall
$\gamma_{\tau}E=\gamma E\times n$ and $\pi_{\tau}H=-n\times(n\times\gamma H)$,
use that $\left(n\times\right)\left(-n\times\right)\left(n\times\right)=\left(n\times\right)$
and multiply \eqref{eq:bdy eq imp} by $-n\times$. For ease of formulation,
we shall suppress denoting the trace operator $\gamma$. We obtain
\[
n\times\left(\partial_{0}\kappa\left(\partial_{0}^{-1}\right)n\times E\right)-n\times H=-n\times h_{1},
\]
or, equivalently,
\[
-n\times\left(\partial_{0}\kappa(\partial_{0}^{-1})n\times\left(n\times\left(n\times E\right)\right)\right)=n\times H-n\times h_{1},
\]
which gives,
\begin{equation}
\pi_{\tau}E=\left(\left(-n\times\right)\partial_{0}\kappa\left(\partial_{0}^{-1}\right)\left(n\times\right)\right)^{-1}\left(n\times h_{1}-H\times n\right).\label{eq:Leontovich}
\end{equation}
Now, in the literature, we find several choices for the impedance
operator 
\[
Z\left(\partial_{0}^{-1}\right)\coloneqq\left(\left(-n\times\right)\partial_{0}\kappa\left(\partial_{0}^{-1}\right)\left(n\times\right)\right)^{-1}.
\]
For discussing some of these choices, we let $\epsilon^{\prime},\mu^{\prime},\sigma^{\prime},\tau^{\prime}\in\mathbb{R}_{\geq0}$
be parameters with $\epsilon'+\sigma'>0$ and $\mu'+\tau'>0$. Mohsen
\cite{Mohsen1982} used the following form
\[
Z\left(\partial_{0}^{-1}\right)=\left(\mu^{\prime}+\tau^{\prime}\partial_{0}^{-1}\right)^{\frac{1}{2}}\left(\epsilon^{\prime}+\sigma^{\prime}\partial_{0}^{-1}\right)^{-\frac{1}{2}},
\]
leading to a boundary condition, which Mohsen attributed to Rytov
and Leontovich. Senior, \cite{doi:10.1139/p62-067}, and De Santis
e.a., \cite{de2012efficient}, discussed the case $\tau'=0$, resulting
in 
\[
Z\left(\partial_{0}^{-1}\right)=\sqrt{\frac{\mu'}{\epsilon'}}\left(1+\frac{\sigma'}{\epsilon'}\partial_{0}^{-1}\right)^{-\frac{1}{2}}.
\]
In the eddy current approximation ($\epsilon'=0$) and also assuming
$\tau'=0$, a case discussed by Hiptmair, L\'opez-Fern\'andez \&
Paganini, \cite{HLP13_1073}, we arrive at the following form
\[
Z\left(\partial_{0}^{-1}\right)=\sqrt{\frac{\mu'}{\sigma'}}\partial_{0}^{-\frac{1}{2}}.
\]
This leads to fractional evolution on the boundary. 
\begin{rem}
In \cite{zbMATH01975976} the author considers Maxwell's equations
in a two-dimensional setting and imposes a family of boundary conditions
on a surface $\Sigma$ contained in an exterior domain, which can
be written in the form
\begin{equation}
\partial_{0}\left(\gamma_{\tau}E-\pi_{\tau}H\right)=\left(\frac{k}{4}-\gamma\right)\gamma_{\tau}E+\left(\frac{k}{4}+\gamma\right)\pi_{\tau}H,\label{eq:Burque}
\end{equation}
where $k$ denotes the curvature of $\Sigma$ and $\gamma$ is an
arbitrary function on $\Sigma.$ It is shown that each of those boundary
conditions yield a well-posed system and the asymptotic behavior of
the corresponding solutions are studied. We remark here that the latter
boundary condition is covered by (\ref{eq:bdy eq imp}). Indeed, choosing
\[
\kappa\left(\partial_{0}^{-1}\right)=\partial_{0}^{-1}-\partial_{0}^{-1}\frac{k}{2}\left(\partial_{0}+\frac{k}{4}+\gamma\right)^{-1}
\]
and $h_{1}=0$ we obtain 
\begin{align*}
0 & =-\partial_{0}\kappa\left(\partial_{0}^{-1}\right)\gamma_{\tau}E+\pi_{\tau}H\\
 & =-\left(1-\frac{k}{2}\left(\partial_{0}+\frac{k}{4}+\gamma\right)^{-1}\right)\gamma_{\tau}E+\pi_{\tau}H,
\end{align*}
which gives 
\[
0=-\left(\partial_{0}-\frac{k}{4}+\gamma\right)\gamma_{\tau}E+\left(\partial_{0}+\frac{k}{4}+\gamma\right)\pi_{\tau}H,
\]
which is just a reformulation of (\ref{eq:Burque}).
\end{rem}

\subsubsection{An abstract trace version}

In this last section, we shall provide a possible way of by-passing
boundary regularity requirements giving the corresponding Leontovich
type boundary conditions its most general form. For doing so, we need
the following definitions, which are adapted from the abstract boundary
data spaces, see e.g.~\cite{Trostorff2014,zbMATH06295127,Waurick2014IMAJMCI_ComprehContr}
\begin{defn}
Recall that $\interior\curl\subseteq\curl$, and consequently $H(\interior\curl,\Omega)$
is a closed subspace of $H(\curl,\Omega).$ We define the \emph{boundary
data space of $\curl$ }by 
\[
\mathcal{BD}(\curl)\coloneqq H(\interior\curl,\Omega)^{\bot},
\]
where the orthogonal complement is taken in $H(\curl,\Omega).$ Moreover,
we denote by $\iota_{\curl}:\mathcal{BD}(\curl)\to H(\curl,\Omega)$
the canonical embedding, i.e. $\iota_{\curl}\Phi=\Phi$. Then, $\iota_{\curl}^{\ast}\iota_{\curl}=\mathrm{id}_{\mathcal{BD}(\curl)}$
and 
\begin{align*}
\iota_{\curl}\iota_{\curl}^{\ast}:H(\curl,\Omega) & \to H(\curl,\Omega)
\end{align*}
is the orthogonal projector on $\mathcal{BD}(\curl).$ An easy computation
shows that 
\begin{align*}
\mathcal{BD}(\curl) & =N(1+\curl\curl),
\end{align*}
which in particular yields that $\curl[\mathcal{BD}(\curl)]\subseteq\mathcal{BD}(\curl).$
We set 
\begin{align*}
\stackrel{\bullet}{\curl}:\mathcal{BD}(\curl) & \to\mathcal{BD}(\curl)\\
\phi & \mapsto\iota_{\curl}^{\ast}\curl\iota_{\curl}
\end{align*}
Consequently, we have $\stackrel{\bullet}{\curl}\stackrel{\bullet}{\curl}=-\mathrm{id}_{\mathcal{BD}(\curl)}.$
\end{defn}
The main idea is now to replace the space $L_{\tau}^{2}(\Gamma)$
in the previous section by the space $\mathcal{BD}(\curl)$ and the
trace operator $\pi_{\tau}$ by the operator $\iota_{\curl}^{\ast}.$
Indeed, setting $X_{0}\coloneqq L^{2}(\Omega)^{3}$ and $X_{1}\coloneqq H(\curl,\Omega)$
we obtain that $\left(\begin{array}{c}
-\curl\\
\iota_{\curl}^{\ast}
\end{array}\right):H(\curl,\Omega)\subseteq L^{2}(\Omega)^{3}\to L^{2}(\Omega)^{3}\oplus\mathcal{BD}(\curl)$ generates an abstract $\grad$--$\dive$ system, see also Proposition
\ref{prop:standard_grad-div}. Consequently, \eqref{eq:Max_reg} with
\[
A\coloneqq\left(\begin{array}{cc}
\left(\begin{array}{ccc}
0 & 0 & 0\\
0 & 0 & 0\\
0 & 0 & 0
\end{array}\right) & -\left(\begin{array}{c}
-\curl\\
\iota_{\curl}^{\ast}
\end{array}\right)^{\ast}\\
\left(\begin{array}{c}
-\curl\\
\iota_{\curl}^{\ast}
\end{array}\right) & \left(\begin{array}{cc}
\left(\begin{array}{ccc}
0 & 0 & 0\\
0 & 0 & 0\\
0 & 0 & 0
\end{array}\right) & \left(\begin{array}{ccc}
0 & 0 & 0\\
0 & 0 & 0\\
0 & 0 & 0
\end{array}\right)\\
\left(\begin{array}{ccc}
0 & 0 & 0\\
0 & 0 & 0\\
0 & 0 & 0
\end{array}\right) & \left(\begin{array}{ccc}
0 & 0 & 0\\
0 & 0 & 0\\
0 & 0 & 0
\end{array}\right)
\end{array}\right)
\end{array}\right)
\]
is well-posed in $H_{\rho,0}\left(\mathbb{R};L^{2}(\Omega)^{3}\oplus L^{2}(\Omega)^{3}\oplus\mathcal{BD}(\curl)\right)$.
In order to characterize the domain of $\left(\begin{array}{c}
-\curl\\
\iota_{\curl}^{\ast}
\end{array}\right)^{\ast}$, we need to understand the operator $\left(\iota_{\curl}^{\ast}\right)^{\diamond}:\mathcal{BD}(\curl)\to H(\curl,\Omega)'$.
\begin{lem}
\label{lem:BD(curl)}We have $\left(\iota_{\curl}^{\ast}\right)^{\diamond}=(1+\curl^{\diamond}\curl)\iota_{\curl}.$\end{lem}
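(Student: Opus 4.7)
The plan is to verify the asserted identity by testing both sides on an arbitrary $x\in X_{1}=H(\curl,\Omega)$ and matching terms via the defining relation of the $\diamond$-operation.

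First, I would unpack the left-hand side. By the definition of $(\cdot)^{\diamond}$, for $y\in\mathcal{BD}(\curl)$ and $x\in X_{1}$ we have
\[
\bigl((\iota_{\curl}^{*})^{\diamond}y\bigr)(x)=\langle y\mid \iota_{\curl}^{*}x\rangle_{\mathcal{BD}(\curl)}.
\]
Since $\iota_{\curl}^{*}$ is by construction the Hilbert-space adjoint of the isometric inclusion $\iota_{\curl}$ taken with respect to the graph inner product on $H(\curl,\Omega)$ (and $\mathcal{BD}(\curl)$ inherits that inner product), I can rewrite this as
\[
\langle \iota_{\curl}y\mid x\rangle_{H(\curl,\Omega)}
=\langle \iota_{\curl}y\mid x\rangle_{L^{2}(\Omega)^{3}}
+\langle \curl \iota_{\curl}y\mid \curl x\rangle_{L^{2}(\Omega)^{3}}.
\]

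Next I would unpack the right-hand side. Via the Gelfand triple $X_{1}\hookrightarrow X_{0}=X_{0}'\hookrightarrow X_{1}'$, the element $\iota_{\curl}y\in L^{2}(\Omega)^{3}=X_{0}$ acts on $x\in X_{1}$ by $\langle \iota_{\curl}y\mid x\rangle_{L^{2}(\Omega)^{3}}$, which gives the contribution of the summand ``$1$'' in $(1+\curl^{\diamond}\curl)\iota_{\curl}$. For the second summand I would invoke the defining formula of $\curl^{\diamond}$: for any $z\in L^{2}(\Omega)^{3}$ we have $(\curl^{\diamond}z)(x)=\langle z\mid \curl x\rangle_{L^{2}(\Omega)^{3}}$; applied with $z=\curl \iota_{\curl}y$ (which is indeed in $L^{2}(\Omega)^{3}$, using the observation already recorded in the paper that $\curl[\mathcal{BD}(\curl)]\subseteq \mathcal{BD}(\curl)\subseteq H(\curl,\Omega)$), this yields
\[
(\curl^{\diamond}\curl\iota_{\curl}y)(x)=\langle \curl \iota_{\curl}y\mid \curl x\rangle_{L^{2}(\Omega)^{3}}.
\]
Adding the two contributions recovers exactly the expression obtained on the left, so $(\iota_{\curl}^{*})^{\diamond}y=(1+\curl^{\diamond}\curl)\iota_{\curl}y$ as elements of $X_{1}'$.

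There is no real obstacle here; the only point requiring a moment of care is keeping track of the three different dualities at play — the $\mathcal{BD}(\curl)$-inner product, the $H(\curl,\Omega)$-graph inner product, and the $X_{1}'\times X_{1}$-pairing extending $\langle\cdot\mid\cdot\rangle_{L^{2}(\Omega)^{3}}$ — and confirming that the Hilbert-space adjoint $\iota_{\curl}^{*}$ really is taken with respect to the graph inner product, which is precisely the convention fixed in the definition of $\mathcal{BD}(\curl)$.
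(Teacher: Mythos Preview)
Your proof is correct and follows essentially the same computation as the paper: apply the definition of $(\cdot)^{\diamond}$, move $\iota_{\curl}^{*}$ across via the adjoint relation to obtain the $H(\curl,\Omega)$-inner product, expand the graph inner product, and identify the two summands with $1$ and $\curl^{\diamond}\curl$ acting on $\iota_{\curl}y$. One incidental remark: to conclude $\curl\iota_{\curl}y\in L^{2}(\Omega)^{3}$ you only need $\iota_{\curl}y\in H(\curl,\Omega)$, so invoking the stronger invariance $\curl[\mathcal{BD}(\curl)]\subseteq\mathcal{BD}(\curl)$ is not necessary here.
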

\begin{proof}
For $\Psi\in H(\curl,\Omega),\Phi\in\mathcal{BD}(\curl)$ we compute
\begin{align*}
\left(\iota_{\curl}^{\ast}\right)^{\diamond}\Phi(\Psi) & =\langle\Phi|\iota_{\curl}^{\ast}\Psi\rangle_{\mathcal{BD}(\curl)}\\
 & =\langle\iota_{\curl}\Phi|\Psi\rangle_{H(\curl,\Omega)}\\
 & =\langle\iota_{\curl}\Phi|\Psi\rangle_{L^{2}(\Omega)^{3}}+\langle\curl\iota_{\curl}\Phi|\curl\Psi\rangle_{L^{2}(\Omega)^{3}}\\
 & =\left(\left(1+\curl^{\diamond}\curl\right)\iota_{\curl}\Phi\right)(\Psi),
\end{align*}
which gives the assertion.\end{proof}
\begin{rem}
Note that in Lemma \ref{lem:BD(curl)}, $1=R_{\left(L^{2}(\Omega)^{3}\right)'}$,
the latter operator being given in Section \ref{sec:Construction-of-Abstract}.
\end{rem}
Thus, we end up with the following characterization result.
\begin{thm}
\label{thm:adjoint_BD}We have $\left(\begin{array}{c}
-\curl\\
\iota_{\curl}^{\ast}
\end{array}\right)^{*}\subseteq\left(\begin{array}{cc}
-\curl & 0\end{array}\right)$ and 
\[
D\left(\left(\begin{array}{c}
-\curl\\
\iota_{\curl}^{\ast}
\end{array}\right)^{*}\right)=\left\{ (E,\eta)\in D(\curl)\times\mathcal{BD}(\curl)\,\left|\,\iota_{\curl}^{\ast}E-\stackrel{\bullet}{\curl}\eta=0\right.\right\} .
\]
\end{thm}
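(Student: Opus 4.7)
The plan is to mirror the strategy used in Theorem \ref{thm:bdy eq adjoint}. I would first shrink the range of possible adjoints using Corollary \ref{cor:adjoint is res}, then invoke Theorem \ref{thm:AGDS-adjoint} together with Lemma \ref{lem:BD(curl)} to produce the characterising identity in $X_1'$, and finally translate that identity into the claimed abstract boundary condition using the algebraic properties of $\stackrel{\bullet}{\curl}$.

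Concretely, the first step is to observe that for $\phi\in D(\interior\curl)=H(\interior\curl,\Omega)$ one has $\phi\perp\mathcal{BD}(\curl)$ in $H(\curl,\Omega)$, hence $\iota_{\curl}^{\ast}\phi=0$. Therefore $\left(\begin{smallmatrix}-\interior\curl\\ 0\end{smallmatrix}\right)\subseteq C$, and since $(\interior\curl)^{\ast}=\curl$, Corollary \ref{cor:adjoint is res} yields $C^{\ast}\subseteq(-\curl\;\;0)$. Combining this with Theorem \ref{thm:AGDS-adjoint} and Lemma \ref{lem:BD(curl)}, a pair $(E,\eta)\in D(\curl)\times\mathcal{BD}(\curl)$ belongs to $D(C^{\ast})$ if and only if $-\curl^{\diamond}E+(1+\curl^{\diamond}\curl)\iota_{\curl}\eta=-\curl E$ in $X_1'$; equivalently, for every $\Psi\in H(\curl,\Omega)$
\[
\langle\curl E|\Psi\rangle_{L^2}-\langle E|\curl\Psi\rangle_{L^2}=-\langle\iota_{\curl}\eta|\Psi\rangle_{H(\curl,\Omega)}.
\]

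The last step is to simplify this identity. I would orthogonally decompose $\Psi=\Psi_{0}+\iota_{\curl}\iota_{\curl}^{\ast}\Psi$ with $\Psi_{0}\in H(\interior\curl,\Omega)$, and similarly $E=E_{0}+\iota_{\curl}\iota_{\curl}^{\ast}E$. The $\Psi_0$-part contributes zero on the right by orthogonality, and on the left by the defining property $\curl^{\ast}=\interior\curl$; the $E_0$-part on the left vanishes for the same reason. So the identity reduces to a relation on $\mathcal{BD}(\curl)$: using $\curl\curl=-\mathrm{id}$ on $\mathcal{BD}(\curl)$, the $L^2$-bracket on the left turns into an $H(\curl,\Omega)$-bracket, which coincides with the $\mathcal{BD}(\curl)$-bracket by isometry of $\iota_{\curl}$, and produces $\langle\stackrel{\bullet}{\curl}\iota_{\curl}^{\ast}E|\iota_{\curl}^{\ast}\Psi\rangle_{\mathcal{BD}(\curl)}$. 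The right-hand side similarly becomes $-\langle\eta|\iota_{\curl}^{\ast}\Psi\rangle_{\mathcal{BD}(\curl)}$. Since $\iota_{\curl}^{\ast}$ is surjective onto $\mathcal{BD}(\curl)$, the condition is equivalent to $\stackrel{\bullet}{\curl}\iota_{\curl}^{\ast}E+\eta=0$, and applying $\stackrel{\bullet}{\curl}$ to both sides together with $\stackrel{\bullet}{\curl}\stackrel{\bullet}{\curl}=-\mathrm{id}_{\mathcal{BD}(\curl)}$ rewrites this as $\iota_{\curl}^{\ast}E-\stackrel{\bullet}{\curl}\eta=0$, as claimed.

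The main obstacle I expect is bookkeeping the different identifications between $\mathcal{BD}(\curl)$ as a subspace of $H(\curl,\Omega)$ and as an abstract Hilbert space with its own inner product, together with carefully exploiting $\curl\curl=-\mathrm{id}$ on $\mathcal{BD}(\curl)$ to convert the mixed $L^{2}$-pairing on the left into an $H(\curl,\Omega)$-pairing that restricts to the $\mathcal{BD}(\curl)$-inner product. Once this algebraic identity is in place, the rest is a direct consequence of Corollary \ref{cor:adjoint is res}, Theorem \ref{thm:AGDS-adjoint}, and Lemma \ref{lem:BD(curl)}.
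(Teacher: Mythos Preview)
Your proposal is correct and follows essentially the same route as the paper's proof: both start with Corollary~\ref{cor:adjoint is res} applied to $\left(\begin{smallmatrix}-\interior\curl\\0\end{smallmatrix}\right)\subseteq C$, then combine Theorem~\ref{thm:AGDS-adjoint} with Lemma~\ref{lem:BD(curl)}, and finally reduce the resulting identity in $X_1'$ via the orthogonal decomposition $H(\curl,\Omega)=H(\interior\curl,\Omega)\oplus\mathcal{BD}(\curl)$ together with $\curl\curl=-\mathrm{id}$ on $\mathcal{BD}(\curl)$. The only cosmetic difference is that the paper keeps the computation at the operator level (rewriting the condition as $(\curl^{\diamond}-\curl)(E-\curl\iota_{\curl}\eta)=0$ and observing that $(\curl^{\diamond}-\curl)\Psi=0$ iff $\Psi\in D(\interior\curl)$), whereas you expand into inner products and decompose $E$ and $\Psi$ explicitly; your intermediate form $\stackrel{\bullet}{\curl}\iota_{\curl}^{\ast}E+\eta=0$ and the paper's $\iota_{\curl}^{\ast}E-\stackrel{\bullet}{\curl}\eta=0$ are one application of $\stackrel{\bullet}{\curl}$ apart.
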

\begin{proof}
Since $\left(\begin{array}{c}
-\interior\curl\\
0
\end{array}\right)\subseteq\left(\begin{array}{c}
-\curl\\
\iota_{\curl}^{\ast}
\end{array}\right),$ we obtain $\left(\begin{array}{c}
-\curl\\
\iota_{\curl}^{\ast}
\end{array}\right)^{*}\subseteq\left(\begin{array}{cc}
-\curl & 0\end{array}\right)$ by Corollary \ref{cor:adjoint is res}. Hence, by Theorem \ref{thm:AGDS-adjoint}
we have $(E,\eta)\in D\left(\left(\begin{array}{c}
-\curl\\
\iota_{\curl}^{\ast}
\end{array}\right)^{\ast}\right)$ if and only if $E\in D(\curl)$ and 
\begin{align*}
-\curl E & =-\curl^{\diamond}E+\left(\iota_{\curl}^{\ast}\right)^{\diamond}\eta\\
 & =-\curl^{\diamond}E+(1+\curl^{\diamond}\curl)\iota_{\curl}\eta,
\end{align*}
by Lemma \ref{lem:BD(curl)}. The latter gives 
\begin{align*}
\left(\curl^{\diamond}-\curl\right)E & =\left(1+\curl^{\diamond}\curl\right)\iota_{\curl}\eta\\
 & =(-\curl\curl+\curl^{\diamond}\curl)\iota_{\curl}\eta\\
 & =\left(\curl^{\diamond}-\curl\right)\curl\iota_{\curl}\eta,
\end{align*}
where we have used $\mathcal{BD}(\curl)=N(1+\curl\curl).$ The latter
is equivalent to 
\[
\iota_{\curl}\iota_{\curl}^{\ast}\left(E-\curl\iota_{\curl}\eta\right)=0.
\]
Indeed, we have that an element $\Psi\in D(\curl)$ satisfies $(\curl^{\diamond}-\curl)\Psi=0$
if and only if for each $\Phi\in D(\curl)$ we have 
\begin{align*}
0 & =\left(\left(\curl^{\diamond}-\curl\right)\Psi\right)(\Phi)\\
 & =\langle\Psi|\curl\Phi\rangle_{L^{2}(\Omega)^{3}}-\langle\curl\Psi|\Phi\rangle_{L^{2}(\Omega)},
\end{align*}
which in turn is equivalent to $\Psi\in D(\curl^{\ast})=D(\interior\curl).$
Applying this to $E-\curl\iota_{\curl}\eta\in D(\curl),$ we obtain
the equivalence of 
\[
\left(\curl^{\diamond}-\curl\right)E=\left(\curl^{\diamond}-\curl\right)\curl\iota_{\curl}\eta
\]
and
\begin{equation}
\iota_{\curl}\iota_{\curl}^{\ast}\left(E-\curl\iota_{\curl}\eta\right)=0.\label{eq:bdy_eqn_curl}
\end{equation}
Using the definition of $\stackrel{\bullet}{\curl}$ and the injectivity
of $\iota_{\curl}$, we deduce the equivalence of \eqref{eq:bdy_eqn_curl}
to
\[
\iota_{\curl}^{\ast}E-\stackrel{\bullet}{\curl}\eta=0.\tag*{{\qedhere}}
\]

\end{proof}
In the remaining part of this subsection, we like to point out that
the boundary condition formulated in Theorem \ref{thm:adjoint_BD}
is indeed a generalization of the classical boundary condition in
Theorem \ref{thm:bdy eq adjoint}. For doing so, let $(E,\eta)\in D(\curl)\times\mathcal{BD}(\curl)$
with $\iota_{\curl}^{\ast}E-\stackrel{\bullet}{\curl}\eta=0.$ Note
that the latter is equivalent to $\stackrel{\bullet}{\curl}\iota_{\curl}^{\ast}E+\eta=0,$
since $\stackrel{\bullet}{\curl}\stackrel{\bullet}{\curl}=-\mathrm{id}_{\mathcal{BD}(\curl)}$.
Thus, we have to show that $\stackrel{\bullet}{\curl}\iota_{\curl}^{\ast}E$
can be identified with $\gamma_{\tau}E$ in a certain sense. We compute
for $E,H\in C^{\infty}(\overline{\Omega})$
\begin{align*}
\langle\stackrel{\bullet}{\curl}\iota_{\curl}^{\ast}E|\iota_{\curl}^{\ast}H\rangle_{\mathcal{BD}(\curl)} & =\langle\iota_{\curl}\stackrel{\bullet}{\curl}\iota_{\curl}^{\ast}E|\iota_{\curl}\iota_{\curl}^{\ast}H\rangle_{H(\curl,\Omega)}\\
 & =\langle\curl\iota_{\curl}\stackrel{\bullet}{\curl}\iota_{\curl}^{\ast}E|\curl\iota_{\curl}\iota_{\curl}^{\ast}H\rangle_{L^{2}(\Omega)^{3}}+\\
 & \quad+\langle\iota_{\curl}\stackrel{\bullet}{\curl}\iota_{\curl}^{\ast}E|\iota_{\curl}\iota_{\curl}^{\ast}H\rangle_{L^{2}(\Omega)^{3}}\\
 & =-\langle\iota_{\curl}\iota_{\curl}^{\ast}E|\curl\iota_{\curl}\iota_{\curl}^{\ast}H\rangle_{L^{2}(\Omega)^{3}}+\langle\curl\iota_{\curl}\iota_{\curl}^{\ast}E|\iota_{\curl}\iota_{\curl}^{\ast}H\rangle_{L^{2}(\Omega)^{3}}\\
 & =-\langle E|\curl H\rangle_{L^{2}(\Omega)^{3}}+\langle\curl E|H\rangle_{L^{2}(\Omega)^{3}}\\
 & =\intop_{\Gamma}(n\times E)^{\ast}H
\end{align*}
in case of a smooth boundary $\Gamma\coloneqq\partial\Omega$. Thus,
indeed $n\times E$ can be identified with $\stackrel{\bullet}{\curl}\iota_{\curl}^{\ast}E$,
or, in other words, $\stackrel{\bullet}{\curl}:\mathcal{BD}(\curl)\to\mathcal{BD}(\curl)$
is a suitable generalization of the operator $n\times:L_{\tau}^{2}(\Gamma)\to L_{\tau}^{2}(\Gamma).$
Indeed, both operators are unitary and their adjoints are the negative
operators, i.e. $(n\times)^{\ast}=-n\times$ as well as $(\stackrel{\bullet}{\curl})^{\ast}=-\stackrel{\bullet}{\curl}.$


\begin{thebibliography}{10}
\bibitem{zbMATH01975976} H.~{Barucq}. \newblock {A new family of first-order boundary conditions for the Maxwell system: Derivation, well-posedness and long-time behavior.} \newblock{\em {J. Math. Pures Appl.}}, XI, 82(1):67--88, 2003.
\bibitem{zbMATH01866780} A.~{Buffa}, M.~{Costabel}, and D.~{Sheen}. \newblock {On traces for $\mathbf H(\text{\bf curl},\Omega)$ in Lipschitz   domains.} \newblock {\em {J. Math. Anal. Appl.}}, 276(2):845--867, 2002.
\bibitem{de2012efficient} V.~De~Santis, S.~Cruciani, M.~Feliziani, and M.~Okoniewski. \newblock Efficient low order approximation for surface impedance boundary   conditions in finite-difference time-domain method. \newblock {\em Magnetics, IEEE Transactions on}, 48(2):271--274, 2012.
\bibitem{2013arXiv1312.5882D} K.~{Disser}, M.~{Meyries}, and J.~{Rehberg}. \newblock {A unified framework for parabolic equations with mixed boundary   conditions and diffusion on interfaces}. \newblock {\em ArXiv e-prints}, Dec. 2013. \newblock \href {http://arxiv.org/abs/1312.5882} {\path{arXiv:1312.5882}}.
\bibitem{Gilliam1982129} D.~Gilliam and J.~Schulenberger. \newblock Electromagnetic waves in a three-dimensional half space with a   dissipative boundary. \newblock {\em Journal of Mathematical Analysis and Applications}, 89(1):129 --   185, 1982.
\bibitem{PhysRev.133.A1411} R.~A. Guyer and J.~A. Krumhansl. \newblock Dispersion relation for second sound in solids. \newblock {\em Phys. Rev.}, 133:A1411--A1417, Mar 1964.
\bibitem{PhysRev.148.766} R.~A. Guyer and J.~A. Krumhansl. \newblock Solution of the linearized phonon boltzmann equation. \newblock {\em Phys. Rev.}, 148:766--778, Aug 1966.
\bibitem{PhysRev.148.778} R.~A. Guyer and J.~A. Krumhansl. \newblock Thermal conductivity, second sound, and phonon hydrodynamic phenomena   in nonmetallic crystals. \newblock {\em Phys. Rev.}, 148:778--788, Aug 1966.
\bibitem{HLP13_1073} R.~Hiptmair, M.~Lopez-Fernandez, and A.~Paganini. \newblock Fast convolution quadrature based impedance boundary conditions. \newblock Technical Report 2013-02, Seminar for Applied Mathematics, ETH   Z{\"u}rich, 2013.
\bibitem{Kalauch2011} A.~Kalauch, R.~Picard, S.~Siegmund, S.~Trostorff, and M.~Waurick. \newblock {A Hilbert Space Perspective on Ordinary Differential Equations with   Memory Term}. \newblock {\em J. Dyn. Differ. Equations}, 26(2):369--399, 2014.
\bibitem{0953-8984-7-7-025} G.~Lebon, M.~Torrisi, and A.~Valenti. \newblock A non-local thermodynamic analysis of second sound propagation in   crystalline dielectrics. \newblock {\em Journal of Physics: Condensed Matter}, 7(7):1461, 1995.
\bibitem{zbMATH03105467} M.~{Leontovich}. \newblock {\"Uber eine Methode der L\"osung des Problems der Ausbreitung   elektromagnetischer Wellen nahe der Erdoberfl\"ache.} \newblock {Izv. Akad. Nauk SSSR, Ser. Fiz. 8, 16-22 (1944).}, 1944.
\bibitem{Mohsen1982} A.~Mohsen. \newblock On the impedance boundary condition. \newblock {\em Applied Mathematical Modelling}, 6(5):405--407, 1982.
\bibitem{Waurick2014MMS_Thermo-Ela} S.~Mukhopadyay, R.~Picard, S.~Trostorff, and M.~Waurick. \newblock On some models in linear thermo-elasticity with rational material   laws. \newblock {\em Mathematics and Mechanics of Solids}, 2014. \newblock \href {http://dx.doi.org/10.1177/1081286514556014}   {\path{doi:10.1177/1081286514556014}}.
\bibitem{Pi2009-1} R.~Picard. \newblock {A Structural Observation for Linear Material Laws in Classical   Mathematical Physics.} \newblock {\em {Math. Methods Appl. Sci.}}, 32(14):1768--1803, 2009.
\bibitem{PDE_DeGruyter} R.~Picard and D.~F. McGhee. \newblock {\em Partial Differential Equations: A unified Hilbert Space   Approach}, volume~55 of {\em {De Gruyter Expositions in Mathematics}}. \newblock {De Gruyter. Berlin, New York. 518 p.}, 2011.
\bibitem{zbMATH06203877} R.~{Picard}, S.~{Trostorff}, and M.~{Waurick}. \newblock {A note on a class of conservative, well-posed linear control   systems.} \newblock In {\em {Progress in partial differential equations. Asymptotic   profiles, regularity and well-posedness. Based on the presentations given at   a session at the 8th ISAAC congress, Moscow, Russia, August 22--27, 2011}},   pages 261--286. Cham: Springer, 2013.
\bibitem{zbMATH06295127} R.~{Picard}, S.~{Trostorff}, and M.~{Waurick}. \newblock {On a class of boundary control problems.} \newblock {\em {Oper. Matrices}}, 8(1):185--204, 2014.
\bibitem{Waurick2014IMAJMCI_ComprehContr} R.~Picard, S.~Trostorff, and M.~Waurick. \newblock On a comprehensive class of linear control problems. \newblock {\em IMA Journal of Mathematical Control and Information}, 2014. \newblock \href {http://dx.doi.org/10.1093/imamci/dnu035}   {\path{doi:10.1093/imamci/dnu035}}.
\bibitem{Waurick2014MMAS_MaxGrav} R.~Picard, S.~Trostorff, and M.~Waurick. \newblock {On a Connection between the Maxwell System, the Extended Maxwell   System, the Dirac Operator and Gravito-Electromagnetism.} \newblock {\em Math. Methods Appl. Sci.}, 2014. \newblock \href {http://dx.doi.org/10.1002/mma.3378}   {\path{doi:10.1002/mma.3378}}.
\bibitem{Waurick2014MMAS_Frac} R.~Picard, S.~Trostorff, and M.~Waurick. \newblock On evolutionary equations with material laws containing fractional   integrals. \newblock {\em Math. Methods Appl. Sci.}, 2014. \newblock \href {http://dx.doi.org/10.1002/mma.3286}   {\path{doi:10.1002/mma.3286}}.
\bibitem{Picard2013} R.~Picard, S.~Trostorff, and M.~Waurick. \newblock On some models for elastic solids with micro-structure. \newblock {\em Z. angew. Math. Mech.}, 2014. \newblock \href {http://dx.doi.org/10.1002/zamm.201300297}   {\path{doi:10.1002/zamm.201300297}}.
\bibitem{Waurick2014PROC_Survey} R.~Picard, S.~Trostorff, and M.~Waurick. \newblock Well-posedness via monotonicity. an overview. \newblock In {\em Special Issue for the Symposium ''Operator Semigroups meet   Complex Analysis, Harmonic Analysis and Mathematical Physics"}, 2014. \newblock Accepted.
\bibitem{Waurick2013JEE_Non-auto} R.~Picard, S.~Trostorff, M.~Waurick, and M.~Wehowski. \newblock On non-autonomous evolutionary problems. \newblock {\em J. Evol. Equ.}, 13:751--776, 2013.
\bibitem{zbMATH00768021} T.~{Senior} and J.~{Volakis}. \newblock {\em {Approximate boundary conditions in electromagnetics.}} \newblock London: IEE, 1995.
\bibitem{doi:10.1139/p62-067} T.~B.~A. Senior. \newblock A note on impedance boundary conditions. \newblock {\em Canadian Journal of Physics}, 40(5):663--665, 1962.
\bibitem{Trostorff2012} S.~Trostorff. \newblock {An alternative approach to well-posedness of a class of differential   inclusions in Hilbert spaces.} \newblock {\em Nonlinear Anal., Theory Methods Appl., Ser. A, Theory Methods},   75(15):5851--5865, 2012.
\bibitem{Trostorff2014} S.~Trostorff. \newblock A characterization of boundary conditions yielding maximal monotone   operators. \newblock {\em J. Funct. Anal.}, 267(8):2787--2822, 2014.
\bibitem{Trostorff} S.~Trostorff. \newblock On integro-differential inclusions with operator-valued kernels. \newblock {\em Math. Methods Appl. Sci.}, 38:834--850, 2015.
\bibitem{Trostorff2014a} S.~Trostorff and M.~Wehowski. \newblock Well-posedness of non-autonomous evolutionary inclusions. \newblock {\em Nonlinear Analysis}, 101:47--65, 2014.
\bibitem{valenti1997heat} A.~Valenti, M.~Torrisi, and G.~Lebon. \newblock Heat pulse propagation by second sound in dielectric crystals. \newblock {\em Journal of Physics: Condensed Matter}, 9(15):3117, 1997.
\bibitem{Waurick2013_ContDep} M.~Waurick. \newblock Continuous dependence on the coefficients for a class of   non-autonomous evolutionary equations. \newblock Technical report, TU Dresden, 2013. \newblock \url{http://arxiv.org/abs/1308.5566}.
\bibitem{Waurick2014SIAM_HomFrac} M.~Waurick. \newblock Homogenization in fractional elasticity. \newblock {\em SIAM Journal on Mathematical Analysis}, 46(2):1551--1576, 2014.
\bibitem{Waurick2014MMAS_Non-auto} M.~Waurick. \newblock On non-autonomous integro-differential-algebraic evolutionary   problems. \newblock {\em Math. Methods Appl. Sci.}, 38:665--676, 2015.
\bibitem{WeissStaffans2013} G.~{Weiss} and O.~J. {Staffans}. \newblock {Maxwell's equations as a scattering passive linear system.} \newblock {\em {SIAM J. Control Optim.}}, 51(5):3722--3756, 2013.
\end{thebibliography}
\end{document}